\newcommand{\ie}{\emph{i.e.}}
\newcommand{\cf}{\emph{cf}}
\newcommand{\Real}{\mathbb{R}}
\newcommand{\sii}{L^2}
\newcommand{\sobi}{\mathop{W_0^{1,2}}\nolimits}
\newcommand{\eps}{\varepsilon}
\newcommand{\Hilbert}{\mathcal{H}}
\numberwithin{equation}{section}
\theoremstyle{plain}
\newtheorem{Theorem}{Theorem}[section]
\newtheorem{Lemma}[Theorem]{Lemma}
\theoremstyle{remark}
\newtheorem{Remark}[Theorem]{Remark}
\begin{document}
%
%
\title{\Large\textbf{%
Spectrum of the Laplacian in a narrow curved strip
with combined Dirichlet and Neumann boundary conditions%
}}
\author{David Krej\v{c}i\v{r}\'{\i}k}
\date{
\footnotesize
\begin{center}
\emph{
Department of Theoretical Physics, Nuclear Physics Institute,
\\
Academy of Sciences, 250\,68 \v{R}e\v{z} near Prague, Czech Republic
\smallskip \\
\emph{E-mail:} krejcirik@ujf.cas.cz
}
\end{center}
6 March 2008}
\maketitle
\begin{abstract}
\noindent
We consider the Laplacian in a domain squeezed
between two parallel curves in the plane,
subject to Dirichlet boundary conditions on one of the curves
and Neumann boundary conditions on the other.
We derive two-term asymptotics for eigenvalues
in the limit when the distance between the curves tends to zero.
The asymptotics are uniform and local in the sense that
the coefficients depend only on the extremal points where
the ratio of the curvature radii of the Neumann boundary
to the Dirichlet one is the biggest.
We also show that the asymptotics can be obtained
from a form of norm-resolvent convergence
which takes into account the width-dependence
of the domain of definition of the operators involved.
\bigskip
\begin{itemize}
\item[\textbf{MSC\,2000:}]
35P15; 49R50; 58J50; 81Q15.
\item[\textbf{Keywords:}]
Laplacian in tubes;
Dirichlet and Neumann boundary conditions;
dimension reduction; norm-resolvent convergence;
binding effect of curvature;
waveguides.
\bigskip
\item[\textbf{To appear in:}]
ESAIM: Control, Optimisation and Calculus of Variations 
\\
\verb|http://www.esaim-cocv.org|
\end{itemize}
\end{abstract}
%
%
\newpage
%
\section{Introduction}
%
Given an open interval $I\subseteq\Real$ (bounded or unbounded),
let $\gamma\in C^2(\overline{I};\Real^2)$
be a unit-speed planar curve.
The derivative~$\dot{\gamma}\equiv(\dot{\gamma}^1,\dot{\gamma}^2)$
and $n:=(-\dot{\gamma}^2,\dot{\gamma}^1)$
define unit tangent and normal vector fields along~$\gamma$, respectively.
The curvature is defined through the Frenet-Serret formulae by
$\kappa:=\det(\dot{\gamma},\ddot{\gamma})$;
it is a bounded and uniformly continuous function on~$I$.

For any positive~$\eps$, we introduce a mapping~$\mathcal{L}_\eps$
from $\overline{I} \times [0,1]$ to~$\Real^2$ by
\begin{equation}\label{StripMap}
  \mathcal{L}_\eps(s,t) := \gamma(s) + \eps \, t \, n(s) \,.
\end{equation}
Assuming that~$\mathcal{L}_\eps$ is injective
and that~$\eps$ is so small that the supremum norm of~$\kappa$
is less than~$\eps^{-1}$,
$\mathcal{L}_\eps$~induces a diffeomorphism
and the image
\begin{equation}\label{strip}
  \Omega_\eps :=
  \mathcal{L}_\eps\big(I \times (0,1)\big)
\end{equation}
has a geometrical meaning
of an open non-self-intersecting strip,
contained between the parallel curves~$\gamma(I)$ and
$
  \gamma_\eps(I) := \mathcal{L}_\eps(I\times\{1\})
$,
and, if~$\partial I$ is not empty, the straight lines
$\mathcal{L}_\eps\big(\{\inf I\}\times(0,1)\big)$
and $\mathcal{L}_\eps\big(\{\sup I\}\times(0,1)\big)$.
The geometry is set in such a way that $\kappa>0$
implies that the parallel curve~$\gamma_\eps$
is ``locally shorter'' than~$\gamma$, and vice versa,
\cf~Figure~\ref{figure}.

Let~$-\Delta_{DN}^{\Omega_\eps}$ be the Laplacian in $\sii(\Omega_\eps)$
with Dirichlet and Neumann boundary conditions
on~$\gamma$ and~$\gamma_\eps$, respectively.
If~$\partial I$ is not empty, we impose Dirichlet
boundary conditions on the remaining parts of~$\partial\Omega_\eps$.

For any self-adjoint operator~$H$ which is bounded from below,
we denote by $\{\lambda_j(H)\}_{j=1}^\infty$
the non-decreasing sequence of numbers corresponding to
the spectral problem of~$H$ according to
the Rayleigh-Ritz variational formula~\cite[Sec.~4.5]{Davies}.
Each~$\lambda_j(H)$ represents either a (discrete) eigenvalue
(repeated according to multiplicity) below the essential spectrum
or the threshold of the essential spectrum of~$H$.
All the eigenvalues below the essential spectrum
may be characterized by this variational/minimax principle.

Under the above assumptions,
our main result reads as follows:
\begin{Theorem}\label{Thm.mine}
For all $j \geq 1$,
\begin{equation}\label{expansion}
  \lambda_j(-\Delta_{DN}^{\Omega_\eps})
  \ = \ \left(\frac{\pi}{2\eps}\right)^2
  + \frac{\inf\kappa}{\eps}
  + o(\eps^{-1})
  \qquad\mbox{as}\qquad
  \eps \to 0
  \,.
\end{equation}
\end{Theorem}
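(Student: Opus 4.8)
The plan is to pass to the natural curvilinear coordinates and reduce the problem to a one-dimensional operator in the transverse variable. Using the diffeomorphism $\mathcal{L}_\eps$, the operator $-\Delta_{DN}^{\Omega_\eps}$ is unitarily equivalent to an operator $H_\eps$ on the fixed domain $I\times(0,1)$, acting on $L^2(I\times(0,1), (1-\eps t\kappa(s))\,ds\,dt)$. Its quadratic form is
\begin{equation}\label{eq:form}
  Q_\eps[\psi] = \int_I\!\!\int_0^1 \left( \frac{|\partial_s\psi|^2}{1-\eps t\kappa} + \frac{|\partial_t\psi|^2}{\eps^2}\,(1-\eps t\kappa) \right) ds\,dt,
\end{equation}
with Dirichlet condition at $t=0$ (and on $\partial I$) and Neumann at $t=1$. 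After subtracting the leading term $(\pi/2\eps)^2$ and multiplying by $\eps$, the strategy is to show that $\eps\big(H_\eps - (\pi/2\eps)^2\big)$ converges, in a suitable sense, to the multiplication operator by $\inf\kappa$ on the one-dimensional transverse ground-state fiber, i.e. to prove matching upper and lower bounds for each $\lambda_j$.

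For the \emph{upper bound}, I would use the minimax principle with trial functions of the form $\psi(s,t) = \varphi(s)\,\chi_1(t)$, where $\chi_1(t) = \sqrt{2}\sin(\pi t/2)$ is the normalized first eigenfunction of $-d^2/dt^2$ on $(0,1)$ with Dirichlet at $0$ and Neumann at $1$ (eigenvalue $(\pi/2)^2$), and $\varphi$ is concentrated near a point $s_0$ where $\kappa(s_0)$ is close to $\inf\kappa$. Expanding \eqref{eq:form} in powers of $\eps$, the leading contribution to $\eps(Q_\eps - (\pi/2\eps)^2\|\psi\|^2)$ comes from the cross term and equals $\int \kappa(s)|\varphi(s)|^2\,ds\cdot\int_0^1 t|\chi_1|^2\,dt$ up to a factor; the key elementary computation is that $\int_0^1 2t\sin^2(\pi t/2)\,dt$ combines with the $\partial_t$ term to give exactly the coefficient $1$ in front of $\inf\kappa$ — this uses the identity that the transverse kinetic energy "sees" the weight $(1-\eps t\kappa)$ in a way that, after integrating by parts in $t$, reproduces $\kappa\int_0^1|\chi_1'|^2 t\,dt / \int |\chi_1|^2$ with the right constant. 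Localizing $\varphi$ near the minimum of $\kappa$ (and taking $j$ orthogonal such trial functions supported near $s_0$, or using a Weyl-sequence/bracketing argument if $\inf\kappa$ is attained) gives $\lambda_j \le (\pi/2\eps)^2 + \inf\kappa/\eps + o(\eps^{-1})$.

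For the \emph{lower bound}, the standard approach is a decomposition of the form domain: write any $\psi$ as $\psi(s,t) = \varphi(s)\chi_1(t) + \eta(s,t)$ with $\eta$ orthogonal to $\chi_1$ in the transverse variable (in the flat $L^2(0,1)$), so that the transverse part of the form acting on $\eta$ is bounded below by $(3\pi/2)^2\eps^{-2}\|\eta\|^2$ — a gap of order $\eps^{-2}$ that dominates any lower-order cross terms and forces $\eta$ to be negligible. Substituting and estimating cross terms via Cauchy–Schwarz (absorbing errors using the spectral gap), one obtains $Q_\eps[\psi] \ge (\pi/2\eps)^2\|\psi\|^2 + \eps^{-1}\int\kappa|\varphi|^2 (1+o(1)) + \text{(positive longitudinal term)}$, and since $\int\kappa|\varphi|^2 \ge \inf\kappa\,\|\varphi\|^2 \ge \inf\kappa\,\|\psi\|^2(1+o(1))$, the minimax principle yields the matching lower bound. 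The main obstacle is the \emph{uniformity and locality} claim: controlling the weight $(1-\eps t\kappa)^{\pm1}$ and all remainder terms uniformly in $s$ (including the case of unbounded $I$, where $\kappa$ need not attain its infimum and one must work with near-minimizing sequences and cutoffs), and making sure the cross-term estimates in the lower bound do not lose the sharp constant — this is where the careful bookkeeping of the $\eps$-expansion of \eqref{eq:form}, together with the transverse spectral gap, must be done with some care. An alternative, cleaner route for the lower bound (which also proves the norm-resolvent statement mentioned in the abstract) is to establish convergence of $\eps(H_\eps - (\pi/2\eps)^2)$ to the fiber operator $\inf\kappa$ in the generalized norm-resolvent sense adapted to the $\eps$-dependent form domains, and then invoke continuity of the spectrum; but the variational two-sided bound above is the most direct way to get \eqref{expansion}.
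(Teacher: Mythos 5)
Your outline is essentially correct and would yield \eqref{expansion}, but it takes a partly different route from the paper, which is worth comparing. The upper bound is the same in spirit (product trial functions $\varphi(s)\chi_1(t)$, and your claimed cancellation giving the coefficient exactly $1$ in front of $\kappa/\eps$ is indeed what the explicit computation gives), except that the paper keeps $\varphi$ arbitrary: the resulting Rayleigh quotient is that of the one-dimensional Schr\"odinger operator $-\Delta_D^I+\kappa/\eps$, which gives the stronger intermediate statement \eqref{stronger} (Theorem~\ref{Thm.stronger}), and the localization at $\inf\kappa$ is then delegated to the known strong-coupling asymptotics \eqref{strong}; you instead re-prove that localization by hand with trial functions concentrated near a (near-)minimizer of $\kappa$, which is more self-contained but only produces the $j$-independent second term. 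The genuine divergence is in the lower bound: rather than your decomposition $\psi=\varphi\chi_1+\eta$ with the transverse spectral gap, the paper bounds the transverse part of the form from below \emph{fiberwise in $s$} by $\nu\big(\eps\kappa(s)\big)/\eps^2$, where $\nu(\epsilon)$ is the lowest eigenvalue of the weighted transverse operator $T_\epsilon$ in $\sii\big((0,1),(1-\epsilon t)dt\big)$, and then uses analytic perturbation theory, $\nu(\epsilon)=(\pi/2)^2+\epsilon+\mathcal{O}(\epsilon^2)$; this produces the potential $\kappa/\eps$ with an $\mathcal{O}(1)$ error, with no cross-term bookkeeping, and again yields Theorem~\ref{Thm.stronger} (hence Theorem~\ref{Thm.mine} uniformly in $j$, since a lower bound on $\lambda_1$ suffices for the $j$-independent asymptotics). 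Your projection-plus-gap scheme is essentially the Friedlander--Solomyak-type argument that the paper does employ, but only in Section~\ref{Sec.norm} for the norm-resolvent theorem, where the extra hypothesis \eqref{Ass.derivative} is required; applied directly at the level of the weighted form $Q_\eps$, as you propose, it needs no such hypothesis and would work, with all cross terms of size $\mathcal{O}(\eps^{-1})$ absorbed by the $\mathcal{O}(\eps^{-2})$ transverse gap as you indicate. One small correction: the step $\int\kappa|\varphi|^2\geq\inf\kappa\,\|\psi\|^2\big(1+o(1)\big)$ is not valid as written when $\inf\kappa>0$ and $\|\varphi\|\ll\|\psi\|$; the mass missing from $\varphi$ sits in $\eta$ and must be paid for out of the $\eps^{-2}$ gap (or the error simply stated as $-C\|\psi\|^2$), which is exactly the absorption mechanism you already invoke, so this is a cosmetic rather than a structural flaw.
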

\begin{figure}[t]
\begin{center}
\epsfig{file=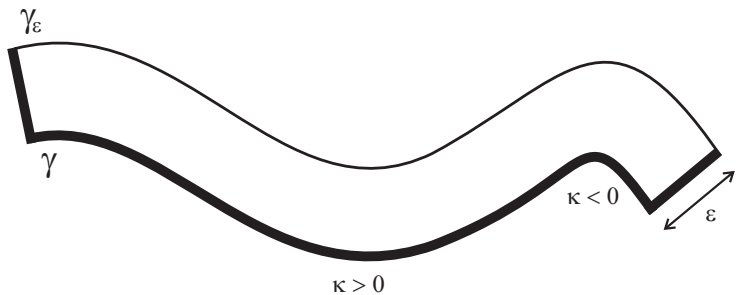,width=0.8\textwidth}
\end{center}
\caption{
The geometry of the strip $\Omega_\eps$ in a case of bounded~$I$.
The thick and thin lines correspond to Dirichlet
and Neumann boundary conditions, respectively.
}
\label{figure}
\end{figure}

Theorem~\ref{Thm.mine} has important consequences
for the spectral properties of the operator $-\Delta_{DN}^{\Omega_\eps}$,
especially in the physically interesting situation $I=\Real$.
In this case, assuming that the curvature~$\kappa$ vanishes at infinity,
the leading term $\pi^2/(2\eps)^2$ of~\eqref{expansion}
coincides with the threshold of the essential spectrum of
$-\Delta_{DN}^{\Omega_\eps}$.
The next term in the expansion then tells us that
\begin{enumerate}
\item[(a)]
the discrete spectrum exists
whenever~$\kappa$ assumes a negative value
and~$\eps$ is sufficiently small;
\item[(b)]
the number of the eigenvalues increases to infinity
as $\eps \to 0$.
\end{enumerate}
This provides an insight into the mechanism
which is behind the qualitative results
obtained by Dittrich and K\v{r}{\'\i}\v{z}
in their 2002 letter~\cite{DKriz2}.
Using $-\Delta_{DN}^{\Omega_\eps}$ as a model
for the Hamiltonian of a quantum waveguide,
they show that the discrete eigenvalues exist if, and only if,
the reference curve~$\gamma$ of sign-definite~$\kappa$
is curved ``in the right direction'',
namely if the Neumann boundary condition is imposed
on the ``locally longer'' boundary
(\ie~$\kappa<0$ in our setting),
and that~(b) holds.
The results were
further generalized in \cite{KKriz,FK3},
numerically tested in~\cite{OM},
and established in a different physical model in~\cite{JLP}.

The purely Dirichlet or Neumann strips
differ from the present situation in many respects
(see~\cite{KKriz} for a comparison).
The case of the Neumann Laplacian~$-\Delta_{N}^{\Omega_\eps}$
is trivial in the sense that
$$
  \lambda_1(-\Delta_{N}^{\Omega_\eps})
  = \lambda_1(-\Delta_N^I)
  = 0
  \,,
$$
independently of the geometry and smallness of~$\eps$,
where $-\Delta_{N}^I$ denotes the Neumann Laplacian in $\sii(I)$.
For $j \geq 2$, one has
$$
  \lambda_j(-\Delta_{N}^{\Omega_\eps})
  = \lambda_j(-\Delta_N^I) + o(1)
  \qquad\mbox{as}\qquad
  \eps \to 0
  \,,
$$
independently of the geometry.
More generally, it is well known that the spectrum
of the Neumann Laplacian on an $\eps$-tubular neighbourhood
of a Riemannian manifold converges when $\eps \to 0$
to the spectrum of the Laplace-Beltrami operator
on the manifold~\cite{Schatzman_1996}.

As for the Dirichlet Laplacian~$-\Delta_{D}^{\Omega_\eps}$,
it is well known~\cite{ES,GJ,DE,KKriz}
that the existence of discrete spectrum
in unbounded strips is robust,
\ie\ independent of the sign of~$\kappa$.
This is also reflected in the asymptotic formula, $j \geq 1$,
\begin{equation}\label{Dirichlet}
  \lambda_j(-\Delta_{D}^{\Omega_\eps})
  = \left(\frac{\pi}{\eps}\right)^2
  + \lambda_j\big(-\Delta_D^I - \frac{\kappa^2}{4}\big)
  + o(1)
  \qquad\mbox{as}\qquad
  \eps \to 0
  \,,
\end{equation}
known for many years \cite{Karp-Pinsky_1988,DE},
where $-\Delta_{D}^I$ denotes the Dirichlet Laplacian in $\sii(I)$.
That is, contrary to Theorem~\ref{Thm.mine},
in the purely Dirichlet case
the second term in the asymptotic expansion is independent of~$\eps$,
always negative unless~$\gamma$ is a straight line,
and its value is determined by the \emph{global} geometry of~$\gamma$.

The local character of~\eqref{expansion} rather resembles
the problem of a straight narrow strip of \emph{variable} width
studied recently by Friedlander and Solomyak
\cite{Friedlander-Solomyak_2007a,Friedlander-Solomyak_2007b},
and also by Borisov and Freitas~\cite{BF}
-- see also~\cite{F} for related work.
In view of their asymptotics,
the spectrum of the Dirichlet Laplacian
is basically determined by the points
where the strip is the widest.
In our model the cross-section is uniform
but the curvature \emph{and} boundary conditions
are not homogeneous.

The purely Dirichlet case with uniform cross-section
differs from the present situation
also in the direct method of the proof of~\eqref{Dirichlet}.
Using the parametrization~\eqref{StripMap},
the spectral problem for the Laplacian $-\Delta_{D}^{\Omega_\eps}$
in the ``curved'' and $\eps$-dependent Hilbert space $\sii(\Omega_\eps)$
is transferred to the spectral problem
for a more complicated operator~$H_\eps^D$
in $\sii\big(I\times(0,1)\big)$.
Inspecting the dependence of the coefficients of~$H_\eps^D$ on~$\eps$,
it turns out that the operator is in the limit $\eps \to 0$
decoupled into a sum of the ``transverse'' Laplacian
multiplied by~$\eps^{-2}$
and of the $\eps$-independent Schr\"odinger operator on~$\gamma$.
At this stage, the minimax principle is sufficient
to establish~\eqref{Dirichlet}.
Furthermore, since the ``straightened'' Hilbert space is independent of~$\eps$,
it is also possible to show that~\eqref{Dirichlet} is
obtained as a consequence of
some sort of norm-resolvent convergence \cite{DE,FK4}.
An alternative approach is based on the $\Gamma$-convergence method~\cite{BMT}.
See also~\cite{Grieser} for a recent survey of 
the thin-limit problem in a wider context.

The above procedure does not work in the present situation
because the transformed operator~$H_\eps^{DN}$
does not decouple as $\eps \to 0$,
at least at the stage of the elementary usage of the minimax principle.
Moreover, the operator domain of~$H_\eps^{DN}$
becomes dependent on~$\eps$;
contrary to the Dirichlet boundary condition,
the Neumann one is transferred
to an $\eps$-dependent and variable Robin-type boundary condition
(\cf~Remark~\ref{Rem.domain} below).
In this paper we propose an alternative approach,
which enables us to treat the case of combined boundary conditions.
Our method of proof is based on refined applications
of the minimax principle.

In the following Section~\ref{Sec.proof},
we prove Theorem~\ref{Thm.mine} as a consequence
of upper and lower bounds to $\lambda_j(-\Delta_{DN}^{\Omega_\eps})$.
More specifically, these estimates imply
\begin{Theorem}\label{Thm.stronger}
For all $j \geq 1$,
\begin{equation}\label{stronger}
  \lambda_j(-\Delta_{DN}^{\Omega_\eps})
  = \left(\frac{\pi}{2\eps}\right)^2
  + \lambda_j\big(-\Delta_D^I + \frac{\kappa}{\eps}\big)
  + \mathcal{O}(1)
  \qquad\mbox{as}\qquad
  \eps \to 0
  \,.
\end{equation}
\end{Theorem}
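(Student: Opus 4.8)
The plan is to establish Theorem~\ref{Thm.stronger} by sandwiching $\lambda_j(-\Delta_{DN}^{\Omega_\eps})$ between upper and lower bounds, both of the form $(\pi/2\eps)^2 + \lambda_j(-\Delta_D^I + \kappa/\eps) + \mathcal{O}(1)$, obtained by carefully chosen test spaces and quadratic-form estimates after passing to the straightened coordinates. First I would use the diffeomorphism $\mathcal{L}_\eps$ to transfer the form $\|\nabla u\|^2_{\sii(\Omega_\eps)}$ to an equivalent form on $\sii(I\times(0,1))$ with a weight; writing the Jacobian $1-\eps t\kappa(s)$ explicitly, the transverse part of the gradient contributes $\eps^{-2}(1-\eps t\kappa)^{-2}|\partial_t\psi|^2$ and the longitudinal part $(1-\eps t\kappa)^{-2}|\partial_s\psi + \ldots|^2$, with the Dirichlet condition at $t=0$ and a natural (Neumann-type, i.e.\ Robin after straightening) condition at $t=1$. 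Expanding $(1-\eps t\kappa)^{-1} = 1 + \eps t\kappa + \mathcal{O}(\eps^2)$ uniformly (legitimate because $\|\kappa\|_\infty < \eps^{-1}$), I expect the transverse term to produce, on functions of the separated form $\psi(s,t)=\varphi(s)\chi_\eps(t)$ with $\chi_\eps$ close to $\sin(\pi t/2)$, the contribution $(\pi/2\eps)^2 + (\kappa(s)/\eps)\,c + \mathcal{O}(1)$ where $c = \int_0^1 t\,(\chi')^2\big/\int_0^1\chi^2$ evaluated at the half-sine equals $1$ after using $\int_0^1 t\cos^2(\pi t/2)\,dt$ and $\int_0^1 \cos^2(\pi t/2)\,dt$; this is exactly the mechanism that pins the coefficient of $\kappa/\eps$ to one.

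For the upper bound I would take the $j$-dimensional trial subspace spanned by $\varphi_k(s)\chi(t)$, $k=1,\dots,j$, where $\{\varphi_k\}$ are the first $j$ eigenfunctions of $-\Delta_D^I + \kappa/\eps$ and $\chi(t)=\sin(\pi t/2)$ (or a slightly $\eps$-adjusted profile to respect the Robin condition at $t=1$ up to $\mathcal{O}(\eps)$ error), insert into the Rayleigh quotient of the transferred form, and read off that the maximum over this subspace is $(\pi/2\eps)^2 + \lambda_j(-\Delta_D^I+\kappa/\eps) + \mathcal{O}(1)$; the cross terms coming from $\partial_s$ hitting the weight and from the $\eps$-expansion remainder are all controlled by the form norm, which is itself $\mathcal{O}(\eps^{-1})$, yielding the $\mathcal{O}(1)$ error once divided by nothing — one must be careful here that $\lambda_j(-\Delta_D^I+\kappa/\eps)$ is itself of order $\eps^{-1}$, so "$\mathcal{O}(1)$'' genuinely means the error is bounded, not small relative to the second term. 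The lower bound is the harder direction: here I would not have separation of variables for the true eigenfunctions, so the plan is to bound the transferred form from below by $\eps^{-2}(1-\eps\kappa)^{-2}$ times the one-dimensional transverse Dirichlet-Robin form in $t$ (whose lowest eigenvalue is $\ge (\pi/2)^2 - \mathcal{O}(\eps)$ uniformly, by a perturbation estimate on the Robin parameter) and then project onto the transverse ground state, getting an effective operator on $I$ of the form $-\Delta_D^I$ with potential $(\pi/2\eps)^2(1-\eps t\kappa)^{-2}$-averaged, minus controllable errors; expanding the potential gives $(\pi/2\eps)^2 + \kappa/\eps + \mathcal{O}(1)$ and hence $\lambda_j$ of the reduced operator bounds $\lambda_j(-\Delta_{DN}^{\Omega_\eps})$ from below.

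The main obstacle I anticipate is the lower bound's decoupling step: because the domain of the transferred operator $H_\eps^{DN}$ depends on $\eps$ through the Robin condition at $t=1$ (as flagged in Remark~\ref{Rem.domain}), one cannot simply write $H_\eps^{DN} \ge \eps^{-2}(-\Delta_t^{DR}) \otimes 1 + \ldots$ with a fixed transverse operator; instead I expect to need a two-sided comparison of the $\eps$-dependent transverse Dirichlet--Robin eigenvalue with $(\pi/2)^2$ that is uniform in $s$, together with a control of how the orthogonal complement of the transverse ground state contributes — that complement has transverse energy $\gtrsim \eps^{-2}(3\pi/2)^2$, a gap of order $\eps^{-2}$, which should absorb the longitudinal kinetic energy and the off-diagonal weight terms via a Cauchy--Schwarz/Young argument, but making all constants uniform in $\eps$ and in $s$ (especially if $I$ is unbounded and $\kappa$ merely bounded and uniformly continuous) will require care. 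A secondary technical point is handling the lateral Dirichlet conditions at $\partial I$ when $I$ is bounded, which should only help (monotonicity of $\lambda_j$ under form restriction) but must be stated so that the comparison operators $-\Delta_D^I + \kappa/\eps$ carry the matching boundary conditions. Once both bounds are in place, \eqref{stronger} follows immediately, and Theorem~\ref{Thm.mine} is then the special case obtained by noting $\lambda_j(-\Delta_D^I + \kappa/\eps) = \eps^{-1}\inf\kappa + o(\eps^{-1})$, since rescaling the potential makes the lowest "band'' concentrate near the minimum of $\kappa$.
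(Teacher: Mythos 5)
Your upper bound is essentially the paper's: products $\varphi(s)\chi_1(t)$ with $\chi_1(t)=\sqrt{2}\sin(\pi t/2)$ and $\varphi$ ranging over (the span of the first $j$ eigenfunctions of) $-\Delta_D^I+\kappa/\eps$, inserted into the straightened Rayleigh quotient, give $(\pi/2\eps)^2+\lambda_j(-\Delta_D^I+\kappa/\eps)+\mathcal{O}(1)$ after using \eqref{strong} to absorb the relative factor $1+\mathcal{O}(\eps)$. Two bookkeeping corrections, though: in the transferred form the weight $h_\eps=1-\eps t\kappa$ multiplies the transverse term to the \emph{first} power (it is the measure; the factor $h_\eps^{-2}$ sits only on the longitudinal derivative), and the coefficient of $\kappa/\eps$ is not $\int_0^1 t(\chi_1')^2\big/\int_0^1\chi_1^2$ but the combination $(\pi/2)^2\int_0^1 t|\chi_1|^2\,dt-\int_0^1 t|\chi_1'|^2\,dt=1$, the first piece coming from subtracting $(\pi/2\eps)^2\|\psi\|_\eps^2$ computed in the \emph{weighted} norm. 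Because the coefficient must come out exactly $1$ to match the statement, this is worth getting right; with the linear-in-$t$ weight the computation is in fact exact, no expansion needed.

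The genuine gap is in your lower bound. You control the transverse (Dirichlet--Robin) eigenvalue only up to $(\pi/2)^2-\mathcal{O}(\eps)$; after multiplying by $\eps^{-2}$ this is an $\mathcal{O}(\eps^{-1})$ error, which is precisely the size of the term $\kappa/\eps$ you are trying to capture, so the error budget does not close for Theorem~\ref{Thm.stronger} (nor even for Theorem~\ref{Thm.mine}). Moreover, recovering $\kappa/\eps$ by expanding a prefactor $(1-\eps\kappa)^{-2}$ would produce the coefficient $\pi^2/2$ rather than $1$, and replacing $\kappa(s)$ by a uniform constant in the weight destroys the $s$-dependence needed to see the effective potential at all. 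The paper's fix is fiberwise and sharp: for each fixed $s$, the transverse part of the form, with the weight $h_\eps(s,\cdot)$ kept both in the energy and in the norm, is bounded below by $\nu\big(\eps\kappa(s)\big)/\eps^2$ times the weighted $\sii$-norm, where $\nu(\epsilon)$ is the lowest eigenvalue of the weighted operator $T_\epsilon\chi=-\chi''+\tfrac{\epsilon}{1-\epsilon t}\chi'$ in $\sii\big((0,1),(1-\epsilon t)dt\big)$ with $\chi(0)=\chi'(1)=0$; analytic perturbation theory gives the two-term expansion $\nu(\epsilon)=(\pi/2)^2+\epsilon+\mathcal{O}(\epsilon^2)$, whose first-order coefficient is exactly $1$ (the same integral identity as in the upper bound), hence $\nu\big(\eps\kappa(s)\big)/\eps^2=(\pi/2\eps)^2+\kappa(s)/\eps+\mathcal{O}(1)$ pointwise in $s$. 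With this, the minimax principle finishes the lower bound directly; the ground-state projection and $\eps^{-2}$-gap machinery you anticipate is not used in the paper's Section~\ref{Sec.proof} at all (it is deployed only in Section~\ref{Sec.norm} for the norm-resolvent theorem, where it yields a slightly weaker eigenvalue error). What your sketch is missing is exactly this first-order, $s$-dependent expansion of the transverse eigenvalue.
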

\noindent
Then Theorem~\ref{Thm.mine} follows at once as
a weaker version of Theorem~\ref{Thm.stronger},
by using known results about the strong-coupling/semiclassical asymptotics
of eigenvalues of the one-dimensional Schr\"odinger operator.
Indeed, for all $j \geq 1$, one has
\begin{equation}\label{strong}
  \lambda_j\big(-\Delta_D^I + \frac{\kappa}{\eps}\big)
  = \frac{\inf\kappa}{\eps} + o(\eps^{-1})
  \qquad\mbox{as}\qquad
  \eps \to 0
  \,.
\end{equation}
This result seems to be well known;
we refer to~\cite[App.~A]{FK1} for a proof in any dimension.

Another goal of the present paper is to show
that the eigenvalue convergence of Theorem~\ref{Thm.mine}
can be obtained as a consequence of the norm-resolvent ``convergence''
of~$-\Delta_{DN}^{\Omega_\eps}$ to $-\Delta_D^I+\kappa/\eps$
as $\eps \to 0$.
We use the quotation marks because the latter operator
is in fact $\eps$-dependent and the norm-resolvent convergence
should be rather interpreted as the convergence
of the difference of corresponding resolvent operators in norm.
However, the operators act in different Hilbert spaces
and the norm-resolvent convergence still requires
a meaningful reinterpretation.
Because of the technical complexity,
we postpone the statement of this convergence result
until Section~\ref{Sec.norm}.

The paper is concluded by Section~\ref{Sec.end}
in which we discuss possible extensions
of our main results.

\section{Spectral convergence}\label{Sec.proof}
%
In this section we give a simple proof of Theorem~\ref{Thm.stronger}
by establishing upper and lower bounds to $\lambda_j(-\Delta_{DN}^{\Omega_\eps})$.
We begin with necessary geometric preliminaries.

\subsection{Curvilinear coordinates}\label{Sec.coord}
%
As usual, the Laplacian $-\Delta_{DN}^{\Omega_\eps}$
is introduced as the self-adjoint operator in $\sii(\Omega_\eps)$
associated with the quadratic form~$Q_{DN}^{\Omega_\eps}$
defined by
\begin{align*}
  Q_{DN}^{\Omega_\eps}[\Psi]
  &:= \int_{\Omega_\eps} |\nabla\Psi(x)|^2 \, dx
  \,,
  \\
  \Psi \in D(Q_{DN}^{\Omega_\eps}) &:=
  \left\{
  \Psi \in W^{1,2}(\Omega_\eps) \ | \quad
  \Psi = 0 \quad \mbox{on} \quad
  \partial\Omega_\eps \setminus \gamma_\eps(I)
  \right\}
  \,.
\end{align*}
Here $\Psi$ on~$\partial\Omega_\eps$
is understood in the sense of traces.
It is natural to express the Laplacian
in the ``coordinates'' $(s,t)$ determined
by the inverse of~$\mathcal{L}_\eps$.

As stated in Introduction,
under the hypotheses that~$\mathcal{L}_\eps$ is injective and
\begin{equation}\label{Ass.basic}
  \eps \,\sup|\kappa| < 1 \,,
\end{equation}
the mapping~\eqref{StripMap} induces a global diffeomorphism
between $I\times(0,1)$ and~$\Omega_\eps$.
This is readily seen by the inverse function theorem
and the expression for the Jacobian
$
  \det(\partial_1\mathcal{L}_\eps,\partial_2\mathcal{L}_\eps)
  =\eps h_\eps
$
of~$\mathcal{L}_\eps$, where
\begin{equation}\label{Jacobian}
  h_\eps(s,t) := 1 - \kappa(s) \, \eps \, t
  \,.
\end{equation}
In fact, \eqref{Ass.basic}~yields the uniform estimates
\begin{equation}\label{uniform}
  0 <
  1 - \eps \sup\kappa
  \leq h_\eps \leq
  1 - \eps \inf\kappa
  < \infty
  \,,
\end{equation}
where the lower bound ensures that
the Jacobian never vanishes in $\overline{I}\times[0,1]$.

The passage to the natural coordinates
(together with a simple scaling)
is then performed via the unitary transformation
\begin{equation*}
  U_\eps : \sii(\Omega_\eps) \to
  \Hilbert_\eps := \sii\big(I\times(0,1),h_\eps(s,t)\,ds\,dt\big):
  \left\{\Psi\mapsto \sqrt{\eps} \ \Psi\circ\mathcal{L}_\eps\right\}
  \,.
\end{equation*}
This leads to a unitarily equivalent operator
$H_\eps:=U_\eps(-\Delta_{DN}^{\Omega_\eps})U_\eps^{-1}$ in~$\Hilbert_\eps$,
which is associated with the quadratic form~$Q_\eps$ defined by
\begin{align*}
  Q_\eps[\psi] &:=
  \int_{I\times(0,1)} \frac{|\partial_1\psi(s,t)|^2}{h_\eps(s,t)} \, ds\,dt
  + \int_{I\times(0,1)} \frac{|\partial_2\psi(s,t)|^2}{\eps^2}
  \, h_\eps(s,t) \, ds\,dt
  \,,
  \\
  \psi \in D(Q_\eps) &:=
  \left\{
  \psi \in W^{1,2}\big(I\times(0,1)\big) \ | \
  \psi = 0 \quad \mbox{on} \quad
  \partial\big(I\times(0,1)\big) \setminus \big(I\times\{1\}\big)
  \right\}
  .
\end{align*}

As a consequence of~\eqref{uniform},
$\Hilbert_\eps$ and $\sii\big(I\times(0,1)\big)$
can be identified as vector spaces
due to the equivalence of norms,
denoted respectively by $\|\cdot\|_\eps$ and $\|\cdot\|$
in the following.
More precisely, we have
\begin{equation}\label{norms}
  1 - \eps \sup\kappa
  \leq \frac{\|\psi\|_{\eps}^2}{\|\psi\|^2} \leq
  1 - \eps \inf\kappa
  \,.
\end{equation}
That is, the fraction of norms
behaves as $1+\mathcal{O}(\eps)$ as $\eps \to 0$.

\subsection{Upper bound}
%
Let~$\psi$ be a test function from the domain $D(Q_\eps)$
of the form
\begin{equation}\label{chi}
  \psi(s,t) := \varphi(s) \chi_1(t) \,,
  \qquad\mbox{where}\qquad
  \chi_1(t):=\sqrt{2} \sin\left(\pi t/2\right)
\end{equation}
and~$\varphi \in \sobi(I)$ is arbitrary.
Note that~$\chi_1$ is a normalized eigenfunction
corresponding to the lowest eigenvalue of $-\Delta_{DN}^{(0,1)}$,
\ie\ the Laplacian in $\sii((0,1))$,
subject to the Dirichlet and Neumann boundary condition
at~$0$ and~$1$, respectively.
A straightforward calculation yields
$$
  Q_\eps[\psi] - \left(\frac{\pi}{2\eps}\right)^2 \|\psi\|_\eps^2
  = \int_I
  \left(
  a_\eps(s)\,|\varphi'(s)|^2 + \frac{\kappa(s)}{\eps}\,|\varphi(s)|^2
  \right)
  ds
  \,,
$$
where
$$
  a_\eps(s) := \int_0^1 \frac{|\chi_1(t)|^2}{h_\eps(s,t)} \, dt
  \,.
$$
Note that $\sup a_\eps = 1+\mathcal{O}(\eps)$
due to~\eqref{uniform} and the normalization of~$\chi_1$.
Using in addition the boundedness of~$\kappa$
and
$
  \|\varphi\|_{\sii(I)} = \|\psi\|
$
together with~\eqref{norms}, we can therefore write
$$
\displaystyle
  \frac{Q_\eps[\psi]}{\,\|\psi\|_\eps^2}
  - \left(\frac{\pi}{2\eps}\right)^2
  - \mathcal{O}(1)
  \, \leq \,
  \big[1+\mathcal{O}(\eps)\big] \,
  \frac{
  \int_I
  \left(
  |\varphi'(s)|^2 + \frac{\kappa(s)}{\eps}\,|\varphi(s)|^2
  \right)
  ds
  }
  {\int_I |\varphi(s)|^2 \, ds}
  \,.
$$
From this inequality,
the minimax principle gives the upper bound
\begin{align}\label{upper}
  \lambda_j(H_\eps)
  - \left(\frac{\pi}{2\eps}\right)^2
  & \, \leq \, \big[1+\mathcal{O}(\eps)\big] \,
  \lambda_j\big(-\Delta_D^I + \frac{\kappa}{\eps}\big)
  + \mathcal{O}(1)
  \nonumber \\
  & \, = \, \lambda_j\big(-\Delta_D^I + \frac{\kappa}{\eps}\big)
  + \mathcal{O}(1)
  \qquad\mbox{as}\qquad
  \eps \to 0 \
\end{align}
for all $j \geq 1$.
Here the equality follows by~\eqref{strong}.

\subsection{Lower bound}\label{Sec.lower}
%
For all $\psi \in D(Q_\eps)$,
we have
$$
  Q_\eps[\psi] \geq
  \int_{I\times(0,1)} \frac{|\partial_1\psi(s,t)|^2}{h_\eps(s,t)} \, ds\,dt
  + \int_{I\times(0,1)} \frac{\nu\big(\eps\kappa(s)\big)}{\eps^2}
  \ |\psi(s,t)|^2
  \, h_\eps(s,t) \, ds\,dt
  \,,
$$
where $\nu(\epsilon) \equiv \lambda_1(T_\epsilon)$
denotes the lowest eigenvalue
of the operator~$T_\epsilon$ in the Hilbert space
$\sii\big((0,1),(1-\epsilon t)dt\big)$
defined by
\begin{align*}
  (T_\epsilon\chi)(t)
  &:= -\chi''(t) + \frac{\epsilon}{1-\epsilon t} \, \chi'(t)
  \,,
  \\
  \chi \in D(T_\epsilon)
  &:= \left\{
  \chi \in W^{2,2}\big((0,1)\big) \ | \quad
  \chi(0) = \chi'(1) = 0
  \right\}
  .
\end{align*}
Note that $\nu(0)=(\pi/2)^2$ and that the corresponding
eigenfunction for $\epsilon=0$ can be identified with~$\chi_1$.
The analytic perturbation theory yields
\begin{equation}\label{analytic}
  \nu(\epsilon) = \left(\frac{\pi}{2}\right)^2 + \epsilon
  + \mathcal{O}\big(\epsilon^2\big)
  \qquad\mbox{as}\qquad
  \epsilon \to 0
  \,.
\end{equation}
Using this expansion and the boundedness of~$\kappa$,
we can estimate
$$
  Q_\eps[\psi] - \left(\frac{\pi}{2\eps}\right)^2 \|\psi\|_\eps^2
  \geq \int_{I\times(0,1)}
  \left(
  \frac{|\partial_1\psi(s,t)|^2}{1-\eps \inf\kappa}
  + \frac{\kappa}{\eps} \, |\psi(s,t)|^2
  - C \, |\psi(s,t)|^2
  \right)
  ds \, dt
  \,,
$$
where~$C$ is a positive constant depending uniquely
on $\|\kappa\|_{L^\infty(I)}$.
Using in addition~\eqref{norms}, we therefore get
$$
\displaystyle
  \frac{Q_\eps[\psi]}{\,\|\psi\|_\eps^2}
  - \left(\frac{\pi}{2\eps}\right)^2
  - \mathcal{O}(1)
  \, \geq \,
  \big[1+\mathcal{O}(\eps)\big] \,
  \frac{
  \int_{I\times(0,1)}
  \left(
  |\partial_1\psi(s,t)|^2 + \frac{\kappa(s)}{\eps}\,|\psi(s,t)|^2
  \right)
  ds
  }
  {\int_{I\times(0,1)} |\psi(s,t)|^2 \, ds}
  \,.
$$
Consequently, the minimax principle gives
\begin{align}\label{lower}
  \lambda_j(H_\eps)
  - \left(\frac{\pi}{2\eps}\right)^2
  & \, \geq \, \big[1+\mathcal{O}(\eps)\big] \,
  \lambda_j\big(-\Delta_D^I + \frac{\kappa}{\eps}\big)
  + \mathcal{O}(1)
  \nonumber \\
  & \, = \, \lambda_j\big(-\Delta_D^I + \frac{\kappa}{\eps}\big)
  + \mathcal{O}(1)
  \qquad\mbox{as}\qquad
  \eps \to 0 \
\end{align}
for all $j \geq 1$.
Again, here the equality follows by~\eqref{strong}.

In view of the unitary equivalence of~$H_\eps$
with $-\Delta_{DN}^{\Omega_\eps}$,
the estimates~\eqref{upper} and~\eqref{lower}
prove Theorem~\ref{Thm.stronger}.

\section{Norm-resolvent convergence}\label{Sec.norm}
%
In this section we study the mechanism which is behind
the eigenvalue convergence of Theorem~\ref{Thm.mine} in more details.
First we explain what we mean by the norm-resolvent convergence
of the family of operators $\{-\Delta_{DN}^{\Omega_\eps}\}_{\eps>0}$.

\subsection{The reference Hilbert space and the result}
%
In Section~\ref{Sec.coord},
we identified the Laplacian $-\Delta_{DN}^{\Omega_\eps}$
with a Laplace-Beltrami-type operator~$H_\eps$ in~$\Hilbert_\eps$.
Now it is more convenient to pass to another
unitarily equivalent operator~$\hat{H}_\eps$
which acts in the ``fixed'' (\ie~$\eps$-independent) Hilbert space
$$
  \Hilbert_0 := \sii\big(I\times(0,1)\big)
  \,.
$$
This is enabled by means of the unitary mapping
\begin{equation*}
  \hat{U}_\eps : \Hilbert_\eps \to \Hilbert_0:
  \big\{\psi\mapsto \sqrt{h_\eps} \ \psi\big\}
  \,,
\end{equation*}
provided that the curvature~$\kappa$
is differentiable in a weak sense;
henceforth we assume that
\begin{equation}\label{Ass.derivative}
  \kappa' \in L^\infty(I)
  \,.
\end{equation}
We set $\hat{H}_\eps:=\hat{U}_\eps H_\eps \hat{U}_\eps^{-1}$.
As a comparison operator to $\hat{H}_\eps$ for small~$\eps$,
we consider the decoupled operator
$$
  \hat{H}_0 :=
  \left(-\Delta_D^I + \frac{\kappa}{\eps}\right) \otimes 1
  + 1 \otimes \Big(-\frac{1}{\eps^{2}}\,\Delta_{DN}^{(0,1)}\Big)
  \qquad\mbox{in}\qquad
  \sii(I)\otimes\sii\big((0,1)\big)
  \,.
$$
Here the subscript~$0$ is just a notational convention, of course,
since~$\hat{H}_0$ still depends on~$\eps$.
Using natural isomorphisms, we may reconsider~$\hat{H}_0$
as an operator in~$\Hilbert_0$.

We clearly have
\begin{equation}\label{pre.lb1}
  \hat{H}_0 \geq \left(\frac{\pi}{2\eps}\right)^2 + \frac{\inf\kappa}{\eps}
  \,.
\end{equation}
At the same time,
\begin{equation}\label{pre.lb2}
  \hat{H}_\eps
  \geq \frac{\nu(\eps\kappa)}{\eps^2}
  \geq \frac{\nu(\eps\inf\kappa)}{\eps^2}
  = \left(\frac{\pi}{2\eps}\right)^2 + \frac{\inf\kappa}{\eps}
  + \mathcal{O}(1)
  \,,
\end{equation}
where the first inequality was established
(for the unitarily equivalent operator~$H_\eps$)
in the beginning of Section~\ref{Sec.lower},
the second inequality holds due to the monotonicity of
$\eps\mapsto\nu(\eps)$ proved in \cite[Thm.~2]{FK3}
and the equality follows from~\eqref{analytic}.
(Alternatively, we could use Theorem~\ref{Thm.mine} to get~\eqref{pre.lb2},
however, one motivation of the present section is to show
that the former can be obtained as a consequence of Theorem~\ref{Thm.norm} below.)
Fix any number
\begin{equation}\label{k}
  k > -\inf\kappa \,.
\end{equation}
It follows that $\hat{H}_\eps-\pi^2/(2\eps)^2+k/\eps$
and $\hat{H}_0-\pi^2/(2\eps)^2+k/\eps$ are positive operators
for all sufficiently small~$\eps$.

Now we are in a position to state the main result of this section.
\begin{Theorem}\label{Thm.norm}
In addition to the injectivity of~$\mathcal{L}_\eps$
and the boundedness of~$\kappa$, let us assume~\eqref{Ass.derivative}.
Then there exist positive constants~$\eps_0$ and~$C_0$,
depending uniquely on~$k$
and the supremum norms of~$\kappa$ and~$\kappa'$,
such that for all $\eps\in(0,\eps_0)$:
\begin{equation*}
  \left\|
  \left[\hat{H}_\eps-\left(\frac{\pi}{2\eps}\right)^2+\frac{k}{\eps}\right]^{-1}
  - \left[\hat{H}_0-\left(\frac{\pi}{2\eps}\right)^2+\frac{k}{\eps}\right]^{-1}
  \right\|
  \ \leq \
  C_0 \, \eps^{3/2}
  \,.
\end{equation*}
\end{Theorem}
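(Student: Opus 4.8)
The plan is to estimate the difference of resolvents by resolvent identities combined with the quadratic-form description of $\hat H_\eps$ and explicit control of how it differs from the decoupled comparison operator $\hat H_0$. First I would compute the quadratic form $\hat Q_\eps$ associated with $\hat H_\eps$ by applying the unitary conjugation $\hat U_\eps$ to $Q_\eps$: writing $\psi = h_\eps^{-1/2}\phi$, the transverse derivative $\partial_2\psi$ produces, besides the main term $\partial_2\phi/h_\eps^{1/2}$, a term proportional to $\eps\kappa\,h_\eps^{-3/2}\phi$, and likewise $\partial_1\psi$ brings in a factor involving $\eps t\kappa'$. The outcome should be
\begin{equation*}
  \hat Q_\eps[\phi] = \int_{I\times(0,1)}\!\Big(\frac{|\partial_1\phi|^2}{h_\eps}
  + \frac{|\partial_2\phi|^2}{\eps^2 h_\eps} + V_\eps\,|\phi|^2\Big)\,ds\,dt
  + (\text{boundary term at }t=1)\,,
\end{equation*}
where $V_\eps = \kappa/\eps + \mathcal O(1)$ with the $\mathcal O(1)$ remainder controlled by $\sup|\kappa|$ and $\sup|\kappa'|$, plus cross terms that are, crucially, $\mathcal O(1)$ relative to the $L^2$ norm but involve $\partial_1\phi$ linearly. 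The Neumann condition on $\gamma_\eps$ becomes a Robin-type boundary term of order $\eps$ (see Remark~\ref{Rem.domain}), which I would absorb into an $\mathcal O(\eps)$ correction using a trace estimate. The key point is to show $\hat Q_\eps - \hat Q_0$, as a form, is bounded by $C\eps$ times the form norm of $\hat H_0$ up to lower-order additive constants; more precisely I expect a bound of the shape $|\hat Q_\eps[\phi] - \hat Q_0[\phi]| \leq C\eps\,(\hat Q_0[\phi] + \eps^{-1}\|\phi\|^2)$ after using $0 < 1-\eps\sup|\kappa| \le h_\eps \le 1+\eps\sup|\kappa|$ and Cauchy--Schwarz on the cross terms.

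Next I would introduce the shifted positive operators $A_\eps := \hat H_\eps - (\pi/2\eps)^2 + k/\eps$ and $A_0 := \hat H_0 - (\pi/2\eps)^2 + k/\eps$, both bounded below by a positive constant $c/\eps$ for small $\eps$ by \eqref{pre.lb1}--\eqref{pre.lb2} and the choice \eqref{k}. Writing the resolvent difference via
\begin{equation*}
  A_\eps^{-1} - A_0^{-1}
  = A_\eps^{-1}(A_0 - A_\eps)A_0^{-1}
  = A_\eps^{-1/2}\,\big(A_\eps^{-1/2}(A_0-A_\eps)A_0^{-1/2}\big)\,A_0^{-1/2}\,A_0^{-1/2}\cdot A_0^{1/2}A_0^{-1/2}
\end{equation*}
— more cleanly, $A_\eps^{-1} - A_0^{-1} = A_\eps^{-1/2}\,R\,A_0^{-1}$ where $R$ is the bounded operator representing the form difference $\hat Q_\eps - \hat Q_0$ sandwiched between $A_\eps^{-1/2}$ and $A_0^{-1/2}$ — I would bound $\|A_\eps^{-1/2}\| \le C\eps^{1/2}$, $\|A_0^{-1}\| \le C\eps$, and $\|R\|\le C\eps$ using the form estimate above together with $\|A_0^{-1/2}\|\le C\eps^{1/2}$ to convert the form-relative bound into an operator bound. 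Multiplying the three factors gives $\eps^{1/2}\cdot\eps\cdot 1$? — I need to be careful with the bookkeeping so that the cross-term in $R$ that is merely $\mathcal O(1)$-form-bounded (not $\mathcal O(\eps)$) does not spoil the rate; that term contributes $\|A_\eps^{-1/2}\|\cdot\|A_0^{-1}\|\cdot\mathcal O(1) = \mathcal O(\eps^{3/2})$, while the genuinely small terms contribute even less, so $\eps^{3/2}$ is exactly the bottleneck and matches the claimed bound.

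Concretely, the steps in order are: (i) derive the explicit form $\hat Q_\eps$ under $\hat U_\eps$, tracking every $\eps$-power and every place where $\kappa'$ enters; (ii) establish the relative-form bound $|\hat Q_\eps[\phi]-\hat Q_0[\phi]| \leq C\eps\,\hat Q_0[\phi] + C\,\|\phi\|^2$ (note the second constant is $\eps$-independent, which after division by $A_0 \gtrsim 1/\eps$ becomes the $\eps$ gain), controlling the boundary term by a standard trace inequality $\|\phi(\cdot,1)\|_{L^2(I)}^2 \le \delta\|\partial_2\phi\|^2 + C_\delta\|\phi\|^2$; (iii) use the lower bounds \eqref{pre.lb1}--\eqref{pre.lb2} to get $\|A_\eps^{-1}\|, \|A_0^{-1}\| = \mathcal O(\eps)$ and $\|A_\eps^{-1/2}\|, \|A_0^{-1/2}\| = \mathcal O(\eps^{1/2})$; (iv) combine via the resolvent identity in the symmetrized form to reach $C_0\eps^{3/2}$. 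The main obstacle I anticipate is step (ii): the cross terms coming from differentiating $h_\eps^{-1/2}$ in the $s$-direction couple $\partial_1\phi$ to $\phi$ with a coefficient of size $\mathcal O(\eps\cdot\eps^{-1}) = \mathcal O(1)$ after the $\eps^{-2}$ in front of the transverse part is accounted for — wait, those actually sit in the longitudinal part, so their coefficient is genuinely $\mathcal O(\eps)$ — the real nuisance is rather the term $\eps\kappa\, h_\eps^{-3/2}\phi$ from the transverse derivative, which after squaring and dividing by $\eps^2$ gives a $\mathcal O(1)$ potential, exactly the $C\|\phi\|^2$ term above; showing this does not destroy positivity and is harmless at order $\eps^{3/2}$, and carefully separating it from the truly negligible $\mathcal O(\eps)$ pieces, is where the argument needs the most care. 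I would also need to check that $D(\hat H_0^{1/2}) = D(\hat Q_\eps)$ so that the form-difference operator $R$ is well-defined and bounded, which follows from the equivalence of the two form norms (both comparable to the $W^{1,2}$ norm with Dirichlet condition off $I\times\{1\}$, uniformly in small $\eps$ up to the $\eps^{-2}$ weighting of the transverse part).
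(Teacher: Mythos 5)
Your overall architecture (shift by $(\pi/2\eps)^2-k/\eps$, use positivity $\gtrsim c/\eps$, sandwich the form difference between square roots of the resolvents) is the same as the paper's, but step (ii) — the heart of the matter — contains a genuine gap, and the way you bookkeep the singular terms hides it. After the unitary $\hat{U}_\eps$ the Neumann condition does \emph{not} become a boundary term of order $\eps$: the form $\hat{Q}_\eps$ carries the boundary term $\int_\partial v_\eps|\psi|^2$ with $v_\eps=\tfrac{1}{2}\kappa/[\eps(1-\eps\kappa)]=\mathcal{O}(\eps^{-1})$ (the coefficient $\eps^2 v_\eps=\mathcal{O}(\eps)$ appears only in the Robin condition of the \emph{operator} domain, cf.\ Remark~\ref{Rem.domain}), while the volume potential is merely $\mathcal{O}(1)$; the singular $\kappa/\eps$ that is supposed to match the potential of $\hat{H}_0$ lives entirely in this boundary (or, before integrating by parts, first-order transverse) term. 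Consequently your proposed relative bound $|\hat{Q}_\eps[\phi]-\hat{Q}_0[\phi]|\leq C\eps\,\hat{Q}_0[\phi]+C\|\phi\|^2$ cannot be obtained from a generic trace inequality, and in fact it is false: take $\psi=\varphi(s)\chi_1(t)+\delta\,\varphi(s)\chi_2(t)$ with $\chi_2(t)=\sqrt{2}\sin(3\pi t/2)$ and $\delta=\eps^{1/2}$; the boundary cross term is of size $\delta\eps^{-1}\|\varphi\|^2=\eps^{-1/2}\|\varphi\|^2$, whereas the right-hand side is only $\mathcal{O}(1)\|\varphi\|^2$ for $\kappa$ bounded. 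The paper's proof of this step (Lemma~\ref{Lem.complement}) needs two ideas absent from your proposal: (1) the orthogonal decomposition $\Hilbert_0=\mathfrak{H}_1\oplus\mathfrak{H}_1^{\bot}$ along the transverse ground state $\chi_1$, on which an \emph{exact cancellation} occurs,
\begin{equation*}
  \int_\partial v_\eps\,|\varphi\chi_1|^2-\int\frac{\kappa}{\eps}\,|\varphi\chi_1|^2
  =\int_I\frac{\kappa^2}{1-\eps\kappa}\,|\varphi|^2=\mathcal{O}(1)\,\|\varphi\|^2_{\sii(I)}\,,
\end{equation*}
so the two separately $\mathcal{O}(\eps^{-1})$ pieces combine to $\mathcal{O}(1)$; and (2) on $\mathfrak{H}_1^{\bot}$ the spectral gap gives the improved lower bound $l_0[\psi_\bot]\geq c\,\eps^{-2}(\|\partial_2\psi_\bot\|^2+\|\psi_\bot\|^2)$, which makes the boundary term there relatively $\mathcal{O}(\eps)$, while the cross term between the two subspaces is only relatively $\mathcal{O}(\eps^{1/2})$ — this cross term, not the $\mathcal{O}(1)$ potential you single out, is the true source of the rate $\eps^{3/2}$ (a genuinely $\mathcal{O}(1)$-form-bounded perturbation sandwiched between two resolvents of norm $\mathcal{O}(\eps)$ would give $\eps^2$, as the paper indeed obtains in Lemma~\ref{Lem.inter} for the non-singular coefficient corrections). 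Without the mode decomposition and the cancellation, your step (ii) fails and the sandwich argument has nothing to feed on; with them, your steps (iii)–(iv) do go through essentially as in the paper.
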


The theorem is proved in several steps
divided into the following subsections.
In particular, it follows as a direct consequence
of Lemmata~\ref{Lem.inter} and~\ref{Lem.complement} below.
In the final subsection we show how it implies
the convergence of eigenvalues of Theorem~\ref{Thm.mine}.

\subsection{The transformed Laplacian}
%
Let us now find an explicit expression for the quadratic form~$\hat{Q}_\eps$
associated with the operator~$\hat{H}_\eps$.
By definition, it is given by
\begin{equation*}
  \hat{Q}_\eps[\psi] := Q_\eps[\hat{U}_\eps^{-1}\psi] \,,
  \qquad
  \psi \in D(\hat{Q}_\eps) := \hat{U}_\eps D(Q_\eps)
  \,.
\end{equation*}
One easily verifies that
\begin{equation}\label{form.domain}
  D(\hat{Q}_\eps) = D(Q_\eps) =: \mathcal{Q}
  \,,
\end{equation}
which is actually independent of~$\eps$.
Furthermore, for any $\psi \in \mathcal{Q}$,
we have
$$
  \hat{Q}_\eps[\psi] = \hat{Q}_\eps^1[\psi]+\hat{Q}_\eps^2[\psi]
  \,,
$$
where
\begin{align*}
  \hat{Q}_\eps^1[\psi]
  &:=
  \int
  \frac{\big|\partial_1 (h_\eps^{-1/2}\psi)\big|^2}{h_\eps}
  &=&
  \int \left\{
  \frac{|\partial_1\psi|^2}{h_\eps^2}
  + V_\eps^1 \, |\psi|^2
  + V_\eps^2 \, \Re\big(\overline{\psi}\partial_1\psi\big)
  \right\} ,
  \\
  \hat{Q}_\eps^2[\psi]
  &:=
  \int \frac{\big|\partial_2(h_\eps^{-1/2}\psi)\big|^2}{\eps^2} \ h_\eps
  &=& \int \left\{
  \frac{|\partial_2\psi|^2}{\eps^2}
  + V_\eps^3 \, |\psi|^2
  + V_\eps^4  \, \Re\big(\overline{\psi}\partial_2\psi\big)
  \right\} ,
\end{align*}
with
\begin{align*}
 V_\eps^1(s,t) &:=  \frac{1}{4} \frac{\kappa'(s)^2 \eps^2 t^2}{h_\eps(s,t)^4} \,,
 &
 V_\eps^2(s,t) &:= \frac{\kappa'(s) \eps t}{h_\eps(s,t)^3} \,,
 \\
 V_\eps^3(s,t) &:= \frac{1}{4} \frac{\kappa(s)^2}{h_\eps(s,t)^2} \,,
 &
 V_\eps^4(s,t) &:= \frac{\kappa(s)}{\eps h_\eps(s,t)} \,.
\end{align*}
Here and in the sequel the integral sign~$\int$ refers
to an integration over $I\times(0,1)$.
Integrating by parts in the expression for~$\hat{Q}_\eps^2[\psi]$,
we finally arrive at
$$
  \hat{Q}_\eps[\psi] =
  \int \left\{
  \frac{|\partial_1\psi|^2}{h_\eps^2}
  + \frac{|\partial_2\psi|^2}{\eps^2}
  + (V_\eps^1-V_\eps^3)  |\psi|^2
  + V_\eps^2 \, \Re\big(\overline{\psi}\partial_1\psi\big)
  \right\}
  + \int_\partial v_\eps \, |\psi|^2
  \,,
$$
where
$$
  v_\eps(s,t) := \frac{1}{2} \frac{\kappa(s)}{\eps \big(1-\eps\kappa(s)\big)}
  \,.
$$
Here and in the sequel the integral sign~$\int_\partial$ refers
to an integration over the boundary $I\times\{1\}$.
\begin{Remark}\label{Rem.domain}
$\hat{H}_\eps$ is exactly
the operator $H_\eps^{DN}$ mentioned briefly in Introduction.
Let us remark in this context that,
contrary to the form domains~\eqref{form.domain},
the operator domains of~$H_\eps$ and~$\hat{H}_\eps$ do differ
(unless the curvature~$\kappa$ vanishes identically).
Indeed, under additional regularity conditions about~$\gamma$,
it can be shown that while functions from $D(H_\eps)$
satisfy Neumann boundary conditions on $I\times\{1\}$,
the functions~$\psi$ from $D(\hat{H}_\eps)$ satisfy
non-homogeneous Robin-type boundary conditions
$
  \partial_2\psi + \eps^2 v_\eps \psi = 0
$
on $I\times\{1\}$.
This is the reason why the decoupling of~$\hat{H}_\eps$
for small~$\eps$ is not obvious in this situation.
At the same time, we see that the operator domain of~$\hat{H}_\eps$
heavily depends on the geometry of~$\gamma$.
For our purposes, however, it will be enough to work
with the associated quadratic form~$\hat{Q}_\eps$
whose domain is independent of~$\eps$ and~$\kappa$.
\end{Remark}
%

\subsection{Renormalized operators and resolvent bounds}
%
It will be more convenient to work with the shifted operators
$$
  L_\eps := \hat{H}_\eps-\left(\frac{\pi}{2\eps}\right)^2+\frac{k}{\eps}
  \qquad\mbox{and}\qquad
  L_0 := \hat{H}_0-\left(\frac{\pi}{2\eps}\right)^2+\frac{k}{\eps}
  \,.
$$
Let~$l_\eps$ and~$l_0$ denote the associated quadratic forms.
It is important that they have the same domain~$\mathcal{Q}$.
More precisely, $\hat{H}_0$~was initially defined as a direct sum,
however, using natural isomorphisms, it is clear that
we can identify the form domain of~$L_0$ with~$\mathcal{Q}$ and
$$
  l_0[\psi] =
  \int \left\{
  |\partial_1\psi|^2
  + \frac{1}{\eps^2}
  \Big[
  |\partial_2\psi|^2-\left(\frac{\pi}{2}\right)^2|\psi|^2
  \Big]
  + \frac{k+\kappa}{\eps} \, |\psi|^2
  \right\}
$$
for all $\psi \in \mathcal{Q}$.

It will be also useful to have an intermediate operator~$L$,
obtained from~$L_\eps$ after neglecting its non-singular dependence on~$\eps$
but keeping the boundary term.
For simplicity, henceforth we assume that~$\eps$ is less than one
and that it is in fact so small that~\eqref{Ass.basic}
holds with a number less than one on the right hand side.
Consequently,
\begin{equation}\label{V-estimates}
  |h_\eps-1| \leq C \eps
  \,, \
  |V_\eps^1| \leq C \eps^2
  \,, \
  |V_\eps^2| \leq C \eps
  \,, \
  |V_\eps^3| \leq C
  \,, \
  |V_\eps^4| \leq C \eps^{-1}
  \,, \
  |v_\eps| \leq C \eps^{-1}
  \,.
\end{equation}
Here and in the sequel,
we use the convention that~$C$ and~$c$ are positive constants
which possibly depend on~$k$ and the supremum norms of~$\kappa$ and~$\kappa'$,
and which may vary from line to line.
In view of these estimates, it is reasonable to introduce~$L$
as the operator associated with the quadratic form~$l$
defined by $D(l):=\mathcal{Q}$ and
$$
  l[\psi] :=
  \int \left\{
  |\partial_1\psi|^2
  + \frac{1}{\eps^2}
  \Big[
  |\partial_2\psi|^2-\left(\frac{\pi}{2}\right)^2|\psi|^2
  \Big]
  + \frac{k}{\eps} \, |\psi|^2
  \right\}
  + \int_\partial v_\eps \, |\psi|^2
$$
for all $\psi \in \mathcal{Q}$.
Indeed, it follows from~\eqref{V-estimates} that
\begin{align}\label{diff.inter}
  \big|l_\eps[\psi] - l[\psi]\big|
  &\leq \int \Big\{
  |h_\eps^{-2}-1| |\partial_1\psi|^2
  + |V_\eps^1-V_\eps^3| |\psi|^2
  + |V_\eps^2| \, |\psi||\partial_1\psi|
  \Big\}
  \nonumber \\
  &\leq C \big(
  \eps \|\partial_1\psi\|^2 + \|\psi\|^2
  \big)
\end{align}
for all $\psi\in\mathcal{Q}$.

Let us now argue that, for every $\psi \in \mathcal{Q}$
and for all sufficiently small~$\eps$
(which precisely means that~$\eps$ has to be less than an explicit constant
depending on~$k$ and the supremum norms of~$\kappa$ and~$\kappa'$),
we have
\begin{equation}\label{lbs}
  \min\left\{
  l_\eps[\psi], l_0[\psi], l[\psi]
  \right\}
  \geq
  c \,
  \left(
  \|\partial_1\psi\|^2 + \eps^{-1} \|\psi\|^2
  \right)
  \,.
\end{equation}
Here the bound for~$l_0$ follows at once by improving
the crude bound~\eqref{pre.lb1} and recalling~\eqref{k}.
The bound for~$l$ follows from that for~$l_\eps$ and from~\eqref{diff.inter}.
As for the bound for~$l_\eps$,
we first remark that the estimates~\eqref{pre.lb2}
actually hold for the part of~$\hat{H}_\eps$
associated with~$\hat{Q}_\eps^2$.
Second, using~\eqref{V-estimates} and some elementary estimates,
we have
$
  \hat{Q}_\eps^1[\psi]
  \geq (c-C\eps) \|\partial_1\psi\|^2
  - C \|\psi\|^2
$.
Hence, for~$\eps$ small enough,
we indeed conclude with the bound for~$l_\eps$.

The estimates~\eqref{lbs} imply that,
for all sufficiently small~$\eps$,
\begin{equation}\label{res.bounds}
  \|L_\eps^{-1}\| \leq C \eps
  \,, \qquad
  \|L_0^{-1}\| \leq C \eps
  \,, \qquad
  \|L^{-1}\| \leq C \eps
  \,.
\end{equation}
%

\subsection{An intermediate convergence result}
%
As the first step in the proof of Theorem~\ref{Thm.norm},
we show that it is actually enough to establish
the norm-resolvent convergence for a simpler operator~$L$ instead of~$L_\eps$.
\begin{Lemma}\label{Lem.inter}
Under the assumptions of Theorem~\ref{Thm.norm},
there exist positive constants~$\eps_0$ and~$C_0$,
depending uniquely on~$k$
and the supremum norms of~$\kappa$ and~$\kappa'$,
such that for all $\eps\in(0,\eps_0)$:
\begin{equation*}
  \left\|
  L_\eps^{-1} - L^{-1}
  \right\|
  \ \leq \
  C_0 \, \eps^{2}
  \,.
\end{equation*}
\end{Lemma}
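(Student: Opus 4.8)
The plan is to use the standard resolvent-difference identity together with the form-difference estimate \eqref{diff.inter}. Write, for any $f,g\in\Hilbert_0$,
$$
  \big\langle (L_\eps^{-1}-L^{-1})f,g\big\rangle
  = \big\langle L_\eps^{-1}f, g\big\rangle - \big\langle f, L^{-1}g\big\rangle
$$
and, setting $\psi:=L_\eps^{-1}f$ and $\phi:=L^{-1}g$ (both in $\mathcal{Q}$ by definition of the operators), recognise $\langle L_\eps^{-1}f, g\rangle - \langle f, L^{-1}g\rangle = l_\eps[\psi,\phi] - l[\psi,\phi]$, the difference of the two (sesquilinear) forms evaluated on the resolvent images. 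The polarised version of \eqref{diff.inter} then gives
$$
  \big|l_\eps[\psi,\phi]-l[\psi,\phi]\big|
  \leq C\big(\eps\,\|\partial_1\psi\|\,\|\partial_1\phi\| + \|\psi\|\,\|\phi\|\big).
$$

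Next I would control the four norms on the right using the coercivity estimates \eqref{lbs} and the resolvent bounds \eqref{res.bounds}. From \eqref{lbs} applied to $\psi=L_\eps^{-1}f$ we get $c\,\|\partial_1\psi\|^2 \leq l_\eps[\psi] = \langle f,\psi\rangle \leq \|f\|\,\|\psi\| \leq \|f\|\cdot C\eps\|f\|$, hence $\|\partial_1\psi\| \leq C\eps^{1/2}\|f\|$; similarly $\|\partial_1\phi\| \leq C\eps^{1/2}\|g\|$, while $\|\psi\|\leq C\eps\|f\|$ and $\|\phi\|\leq C\eps\|g\|$ come directly from \eqref{res.bounds}. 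Plugging in,
$$
  \big|l_\eps[\psi,\phi]-l[\psi,\phi]\big|
  \leq C\big(\eps\cdot\eps^{1/2}\cdot\eps^{1/2} + \eps\cdot\eps\big)\|f\|\,\|g\|
  = C\eps^{2}\|f\|\,\|g\|.
$$
Taking the supremum over $\|f\|=\|g\|=1$ yields $\|L_\eps^{-1}-L^{-1}\|\leq C_0\eps^2$, which is the claim; $\eps_0$ is just the threshold below which \eqref{Ass.basic} (with constant $<1$), \eqref{lbs} and \eqref{res.bounds} all hold.

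One point that needs a little care is that \eqref{diff.inter} is stated for the quadratic form (diagonal) values $l_\eps[\psi]-l[\psi]$, whereas the resolvent identity produces the off-diagonal sesquilinear quantity $l_\eps[\psi,\phi]-l[\psi,\phi]$; I would note that the difference $l_\eps-l$ is generated by the bounded-coefficient expressions $h_\eps^{-2}-1$, $V_\eps^1-V_\eps^3$, $V_\eps^2$ (no boundary term, since the $v_\eps$ term is common to both $l_\eps$ and $l$), so its sesquilinear form obeys the same bound with $|\partial_1\psi|^2\mapsto|\partial_1\psi||\partial_1\phi|$ and $|\psi|^2\mapsto|\psi||\phi|$ by Cauchy--Schwarz inside the integral. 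That is the only genuinely new estimate; everything else is bookkeeping with constants already recorded in \eqref{V-estimates}--\eqref{res.bounds}. The main ``obstacle'', such as it is, is simply to extract the sharp $\eps^{1/2}$ gain for $\|\partial_1\psi\|$ from \eqref{lbs} rather than the weaker bound one would get by treating $\|\partial_1\psi\|$ via $\|L_\eps^{-1}\|$ alone — without that gain one would only obtain $\eps^{3/2}$, which is still enough for Theorem~\ref{Thm.norm} but weaker than what the lemma asserts.
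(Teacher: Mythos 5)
Your argument is correct and is essentially the paper's own proof: both rest on the polarised form of the difference estimate \eqref{diff.inter} evaluated at $\phi=L^{-1}f$, $\psi=L_\eps^{-1}g$, with the coercivity bound \eqref{lbs} supplying the $\eps^{1/2}$ gain on $\|\partial_1\psi\|$, $\|\partial_1\phi\|$ and the resolvent bounds \eqref{res.bounds} supplying the remaining factors of $\eps$. The only difference is cosmetic bookkeeping (you bound $\|\partial_1\psi\|$ and $\|\psi\|$ separately, the paper packages them as $\sqrt{l[\phi]\,l_\eps[\psi]}$), so nothing further is needed.
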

\begin{proof}
We are inspired by~\cite[Sec.~3]{Friedlander-Solomyak_2007a}.
Adapting the estimate~\eqref{diff.inter}
for the sesquilinear form generated by $l_\eps-l$
and using~\eqref{lbs}, we get
\begin{align*}
  \big|l_\eps(\phi,\psi) - l(\phi,\psi)\big|
  &\leq C
  \sqrt{\eps \|\partial_1\phi\|^2 + \|\phi\|^2}
  \sqrt{\eps \|\partial_1\psi\|^2 + \|\psi\|^2}
  \\
  &\leq (C/c) \, \eps \sqrt{l[\phi]\,l_\eps[\psi]}
\end{align*}
for every $\phi,\psi \in \mathcal{Q}$.
Choosing $\phi:=L^{-1} f$ and $\psi:=L_\eps^{-1} g$,
where $f,g\in\Hilbert_0$ are arbitrary,
we arrive at
$$
  \big|(f,L^{-1}g) - (f,L_\eps^{-1}g)\big|
  \leq (C/c) \, \eps \sqrt{(f,L^{-1}f)(g,L_\eps^{-1}g)}
  \leq (C^2/c) \, \eps^2 \, \|f\| \, \|g\|
  \,.
$$
Here $(\cdot,\cdot)$ denotes the inner product in~$\Hilbert_0$
and the second inequality follows from~\eqref{res.bounds}.
This completes the proof with $C_0:=C^2/c$.
\end{proof}
%

\subsection{An orthogonal decomposition of the Hilbert space}
%
Contrary to Lemma~\ref{Lem.inter},
the convergence of $\|L^{-1}-L_0^{-1}\|$ is less obvious.
We follow the idea of~\cite{Friedlander-Solomyak_2007a}
and decompose the Hilbert space~$\Hilbert_0$
into an orthogonal sum
$$
  \Hilbert_0 = \mathfrak{H}_1 \oplus \mathfrak{H}_1^\bot
  \,,
$$
where the subspace~$\mathfrak{H}_1$ consists of functions~$\psi_1$
such that
\begin{equation}\label{psi1}
  \psi_1(s,t) = \varphi_1(s) \chi_1(t)
  \,.
\end{equation}
Recall that~$\chi_1$ has been introduced in~\eqref{chi}.
Since~$\chi_1$ is normalized, we clearly have
$
  \|\psi_1\|=\|\varphi_1\|_{\sii(I)}
$.
Given any $\psi\in\Hilbert_0$, we have the decomposition
\begin{equation}\label{psi.decomposition}
  \psi = \psi_1 + \psi_\bot
  \qquad\mbox{with}\qquad
  \psi_1 \in \mathfrak{H}_1, \ \psi_\bot\in \mathfrak{H}_1^\bot
  \,,
\end{equation}
where~$\psi_1$ has the form~\eqref{psi1}
with $\varphi_1(s):=\int_0^1 \psi(s,t) \chi_1(t) dt$.
Note that $\psi_1\in\mathcal{Q}$ if $\psi\in\mathcal{Q}$.
The inclusion $\psi_\bot\in\mathfrak{H}_1^\bot$ means that
\begin{equation}\label{orth.identity1}
  \int_0^1 \psi_\bot(s,t) \, \chi_1(t) \, dt = 0
  \qquad\mbox{for a.e.}\quad s \in I
  \,.
\end{equation}
If in addition $\psi_\bot \in \mathcal{Q}$,
then one can differentiate the last identity to get
\begin{equation}\label{orth.identity2}
  \int_0^1 \partial_1\psi_\bot(s,t) \, \chi_1(t) \, dt = 0
  \qquad\mbox{for a.e.}\quad s \in I
  \,.
\end{equation}
%

\subsection{A complementary convergence result}
%
Now we are in a position to prove the following result,
which together with Lemma~\ref{Lem.inter}
establishes Theorem~\ref{Thm.norm}.
\begin{Lemma}\label{Lem.complement}
Under the assumptions of Theorem~\ref{Thm.norm},
there exist positive constants~$\eps_0$ and~$C_0$,
depending uniquely on~$k$
and the supremum norms of~$\kappa$ and~$\kappa'$,
such that for all $\eps\in(0,\eps_0)$:
\begin{equation*}
  \left\|
  L^{-1} - L_0^{-1}
  \right\|
  \ \leq \
  C_0 \, \eps^{3/2}
  \,.
\end{equation*}
\end{Lemma}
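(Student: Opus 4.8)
The plan is to estimate $\|L^{-1} - L_0^{-1}\|$ by comparing the quadratic forms $l$ and $l_0$ on the range of the resolvents, exploiting the orthogonal decomposition $\Hilbert_0 = \mathfrak{H}_1\oplus\mathfrak{H}_1^\bot$ from the previous subsection. First I would observe that the two forms $l$ and $l_0$ differ in two respects: $l$ carries the boundary term $\int_\partial v_\eps |\psi|^2$ (with $|v_\eps|\le C\eps^{-1}$) while $l_0$ has instead the bulk term $\tfrac{\kappa}{\eps}\int|\psi|^2$, and the transverse parts $|\partial_2\psi|^2 - (\pi/2)^2|\psi|^2$ appear identically. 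The key algebraic fact is that for a product function $\psi_1 = \varphi_1(s)\chi_1(t)$ one has, integrating by parts and using $-\chi_1'' = (\pi/2)^2\chi_1$ together with $\chi_1(0)=0$, $\chi_1'(1)=0$,
\begin{equation*}
  \int \Big[|\partial_2\psi_1|^2 - \Big(\frac{\pi}{2}\Big)^2|\psi_1|^2\Big] = 0
  \,,
\end{equation*}
so that on $\mathfrak{H}_1\cap\mathcal{Q}$ the singular transverse term vanishes; moreover a similar computation relates $\int_\partial v_\eps|\psi_1|^2 = \tfrac12\int_I \tfrac{\kappa(s)}{\eps(1-\eps\kappa(s))}|\varphi_1(s)|^2\chi_1(1)^2\,ds$ — recalling $\chi_1(1)^2 = 2$ — to $\tfrac{1}{\eps}\int_I\kappa|\varphi_1|^2$ up to an $\mathcal{O}(1)$ error, since $\tfrac{1}{1-\eps\kappa} = 1 + \mathcal{O}(\eps)$. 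Hence $l$ and $l_0$ essentially agree on $\mathfrak{H}_1$ modulo bounded ($\mathcal{O}(1)$ rather than $\mathcal{O}(\eps^{-1})$) corrections.

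Next I would control the component in $\mathfrak{H}_1^\bot$. The point is that on $\mathfrak{H}_1^\bot\cap\mathcal{Q}$ the transverse operator $-\partial_2^2$ with Dirichlet condition at $0$ and Neumann at $1$ has all its spectrum at or above the \emph{second} eigenvalue $(3\pi/2)^2$, so
\begin{equation*}
  \int\Big[|\partial_2\psi_\bot|^2 - \Big(\frac{\pi}{2}\Big)^2|\psi_\bot|^2\Big]
  \ \geq\ \Big[\Big(\frac{3\pi}{2}\Big)^2 - \Big(\frac{\pi}{2}\Big)^2\Big]\|\psi_\bot\|^2
  = 2\pi^2\|\psi_\bot\|^2
  \,,
\end{equation*}
which, divided by $\eps^2$, gives an enormous coercivity of order $\eps^{-2}$ on the orthogonal complement. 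Combined with the cross terms — which vanish at the level of the principal parts thanks to \eqref{orth.identity1}--\eqref{orth.identity2}, and are otherwise controlled by the bounded potentials and the boundary term — this forces the $\mathfrak{H}_1^\bot$-component of $L^{-1}f$ (and likewise of $L_0^{-1}f$) to be small: more precisely $\|\psi_\bot\| \leq C\eps^2\|f\|$ after using $\|L^{-1}\|\le C\eps$ from \eqref{res.bounds} and extracting the transverse gain. So both resolvents map into a neighbourhood of $\mathfrak{H}_1$ with error $\mathcal{O}(\eps^2)$.

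I would then assemble these facts in the standard second-resolvent-style manner, as in Lemma~\ref{Lem.inter}: for $f,g\in\Hilbert_0$ put $\phi := L^{-1}f$, $\psi := L_0^{-1}g$, write $\phi = \phi_1 + \phi_\bot$, $\psi = \psi_1 + \psi_\bot$, and estimate
\begin{equation*}
  \big|(f,L^{-1}g) - (f,L_0^{-1}g)\big|
  = \big|l(\phi,\psi) - l_0(\phi,\psi)\big| + \big|l(\phi,\psi) - l_0(\phi,\psi)\big|\text{-type cross terms}
\end{equation*}
— more carefully, $(f,L^{-1}g)-(f,L_0^{-1}g) = l_0(L_0^{-1}f, L^{-1}g) - l(L_0^{-1}f, L^{-1}g)$ wherever both forms are defined on the relevant vectors — and bound the difference form $(l - l_0)(\phi,\psi)$ by splitting into the $\mathfrak{H}_1\times\mathfrak{H}_1$ part (an $\mathcal{O}(1)$-bounded form, contributing $\mathcal{O}(1)\cdot\|\phi\|\|\psi\| = \mathcal{O}(\eps^2)\|f\|\|g\|$, but one needs to be a bit sharper to land on $\eps^{3/2}$) and the parts involving at least one $\bot$-component (each carrying a factor $\|\phi_\bot\|$ or $\|\psi_\bot\| = \mathcal{O}(\eps^2)$ against a form bounded by $\eps^{-1}$, i.e.\ $\mathcal{O}(\eps)$, times the $\sqrt{l}$-norms of order $\eps^{-1/2}$). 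The main obstacle will be bookkeeping the singular cross terms between $\mathfrak{H}_1$ and $\mathfrak{H}_1^\bot$ in the forms $l$ and $l_0$ — in particular the boundary term $\int_\partial v_\eps \,\Re(\overline{\psi_1}\psi_\bot)$, which is $\mathcal{O}(\eps^{-1})$ a priori and does \emph{not} vanish by orthogonality (the trace of $\psi_\bot$ on $I\times\{1\}$ need not be orthogonal to $\chi_1(1)$) — and showing that the transverse coercivity of order $\eps^{-2}$ on $\mathfrak{H}_1^\bot$ beats it with the right power of $\eps$ to spare, yielding the claimed rate $\eps^{3/2}$ rather than merely $\eps$. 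This is exactly the delicate trace-estimate point that distinguishes the present combined-boundary-condition problem from the variable-width Dirichlet analysis of Friedlander and Solomyak, and it is where the hypothesis $\kappa'\in L^\infty$ and a careful use of a one-dimensional trace inequality $|\psi(s,1)|^2 \leq C(\|\psi(s,\cdot)\|_{L^2(0,1)}^2 + \|\psi(s,\cdot)\|_{L^2(0,1)}\|\partial_2\psi(s,\cdot)\|_{L^2(0,1)})$ will enter.
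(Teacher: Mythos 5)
Your outline follows the same route as the paper's proof: decompose $\Hilbert_0=\mathfrak{H}_1\oplus\mathfrak{H}_1^\bot$, use the exact cancellation between the boundary term $\int_\partial v_\eps|\psi_1|^2$ and the bulk term $\tfrac{1}{\eps}\int\kappa|\psi_1|^2$ on product functions (leaving an $\mathcal{O}(1)$ remainder $\int_I\frac{\kappa^2}{1-\eps\kappa}|\varphi_1|^2$), use the transverse spectral gap to get $\eps^{-2}$-coercivity of $l_0$ on $\mathfrak{H}_1^\bot$, and assemble via the form-comparison resolvent argument of Lemma~\ref{Lem.inter}. However, you stop exactly at the decisive step: the mixed term $\int_\partial v_\eps\,\overline{\psi_1}\psi_\bot$, which you correctly identify as not killed by orthogonality, is only flagged as ``the main obstacle'' rather than estimated, and it is precisely this term that dictates the rate $\eps^{3/2}$. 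The estimate is in fact short with the tools you already list. Since the bulk cross term $\tfrac{1}{\eps}\int\kappa\,\overline{\psi_1}\psi_\bot$ vanishes by \eqref{orth.identity1} (the $t$-integral of $\chi_1\psi_\bot$ is zero for a.e.~$s$), one has $m(\psi_1,\psi_\bot)=\int_\partial v_\eps\,\overline{\psi_1}\psi_\bot$ with $m:=l-l_0$. Using $|\psi_1(s,1)|=\sqrt{2}\,|\varphi_1(s)|$, Cauchy--Schwarz in $s$, and the trace inequality $\int_I|\psi_\bot(s,1)|^2\,ds\le 2\|\psi_\bot\|\,\|\partial_2\psi_\bot\|$, this is bounded by $C\eps^{-1}\|\psi_1\|\sqrt{\|\psi_\bot\|\,\|\partial_2\psi_\bot\|}$; then the coercivity bounds \eqref{lbs.bis}, namely $\|\psi_1\|^2\le C\eps\,l_0[\psi_1]$ and $\|\psi_\bot\|\,\|\partial_2\psi_\bot\|\le C\eps^2 l_0[\psi_\bot]$, give $|m(\psi_1,\psi_\bot)|\le C\eps^{-1}\cdot\eps^{1/2}\cdot\eps\,\sqrt{l_0[\psi_1]\,l_0[\psi_\bot]}=C\eps^{1/2}\sqrt{l_0[\psi_1]\,l_0[\psi_\bot]}$. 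Together with your diagonal estimates (which give the better factor $\eps$) and $l_0[\psi]=l_0[\psi_1]+l_0[\psi_\bot]$, one gets $|m[\psi]|\le C\eps^{1/2} l_0[\psi]$, hence $|l(\phi,\psi)-l_0(\phi,\psi)|\le C\eps^{1/2}\sqrt{l_0[\phi]\,l[\psi]}$, and the choice $\phi:=L_0^{-1}f$, $\psi:=L^{-1}g$ with \eqref{res.bounds} yields $\|L^{-1}-L_0^{-1}\|\le C\eps^{1/2}\cdot\eps=C\eps^{3/2}$, exactly as in the paper.

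Two further corrections. First, your intermediate claim that the $\mathfrak{H}_1^\bot$-component of $L^{-1}f$ satisfies $\|\psi_\bot\|\le C\eps^2\|f\|$ is unjustified: $L$ (unlike $L_0$) does not commute with the projection onto $\mathfrak{H}_1$ because of the boundary term, and the straightforward argument $l_0[\psi_\bot]\le C\,l[\psi]=C\,(f,\psi)\le C\eps\|f\|^2$ combined with $l_0[\psi_\bot]\ge c\,\eps^{-2}\|\psi_\bot\|^2$ only gives $\|\psi_\bot\|\le C\eps^{3/2}\|f\|$; fortunately this norm-level bound is not needed at all if you work purely at the level of the forms as above. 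Second, the hypothesis $\kappa'\in L^\infty(I)$ plays no role in this lemma: the difference $l-l_0$ involves only $\kappa$; the derivative of the curvature is needed to define $\hat{U}_\eps$, $\hat{H}_\eps$ and enters the comparison $L_\eps$ versus $L$ (Lemma~\ref{Lem.inter}) through $V_\eps^1$ and $V_\eps^2$, not the comparison $L$ versus $L_0$.
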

\begin{proof}
Again, we use some of the ideas of~\cite[Sec.~3]{Friedlander-Solomyak_2007a}.
As a consequence of~\eqref{orth.identity1} and~\eqref{orth.identity2},
we get that $l_0(\psi_1,\psi_\bot)=0$; therefore
\begin{equation}\label{nomixed}
  l_0[\psi] = l_0[\psi_1]+l_0[\psi_\bot]
\end{equation}
for every $\psi \in \mathcal{Q}$.
At the same time, for sufficiently small~$\eps$,
\begin{equation}\label{lbs.bis}
\begin{aligned}
  l_0[\psi_1]
  &\geq c
  \left(
  \|\varphi_1'\|_{\sii(I)}^2 + \eps^{-1} \, \|\varphi_1\|_{\sii(I)}^2
  \right)
  \,,
  \\
  l_0[\psi_\bot]
  &\geq c
  \Big(
  \|\partial_1\psi_\bot\|^2 + \eps^{-2} \, \|\partial_2\psi_\bot\|^2
  + \eps^{-2} \, \|\partial\psi_\bot\|^2
  \Big)
  \,,
\end{aligned}
\end{equation}
where the second inequality is based on
$\int |\partial_2\psi_\bot|^2 \geq \pi^2 \int |\psi_\bot|^2$.

Let us now compare~$l_0$ with~$l$.
For every $\psi \in \mathcal{Q}$, we define
$$
  m[\psi] := l[\psi]-l_0[\psi]
  = \int_\partial v_\eps \, |\psi|^2
  - \int \frac{\kappa}{\eps} \, |\psi|^2
  \,.
$$
Using the estimates~\eqref{V-estimates} and~\eqref{lbs.bis},
we get, for any $\psi \in \mathcal{Q}$
decomposed as in~\eqref{psi.decomposition},
\begin{align*}
  m[\psi_1]
  &=
  \int_I \frac{\kappa(s)^2}{1-\eps \kappa(s)} \, |\varphi_1(s)|^2 ds
  \leq C \, \|\varphi_1\|_{\sii(\Real)}^2
  \leq (C/c) \, \eps \, l_0[\psi_1]
  \,,
  \\
  |m[\psi_\bot]|
  &\leq C \eps^{-1} \Big(
  \|\psi_\bot\| \|\partial_2\psi_\bot|| + \|\psi_\bot\|^2
  \Big)
  \leq 2 (C/c) \, \eps \, l_0[\psi_\bot]
  \,,
  \\
  |m(\psi_1,\psi_\bot)|
  &= \left| \int_\partial
  v_\eps \, \overline{\psi_1} \psi_\bot
  \right|
  \leq C \eps^{-1} \|\psi_1\| \sqrt{\|\psi_\bot\| \|\partial_2\psi_\bot||}
  \\
  &\leq (C/c) \, \eps^{1/2} \sqrt{l_0[\psi_1]l_0[\psi_\bot]}
  \,.
\end{align*}
Except for~$m[\psi_1]$,
here the boundary integral was estimated via
$$
  \int_\partial |\psi|^2
  = \int \partial_2|\psi|^2
  = \int 2\,\Re\big(\overline{\psi}\partial_2\psi\big)
  \leq 2 \, \|\psi\| \, \|\partial_2\psi\|
  \,.
$$
Taking~\eqref{nomixed} into account,
we conclude with the estimate
(which can be again adapted for the corresponding sesquilinear form)
$$
  |m[\psi]| \leq C \, \eps^{1/2} \ l_0[\psi]
$$
valid for every $\psi \in \mathcal{Q}$
and all sufficiently small~$\eps$.
In particular, this implies the crude estimates
$$
  c \, l_0[\psi] \leq l[\psi] \leq C \, l_0[\psi]
  \,.
$$

Summing up, we have the crucial bound
$$
  \big| l(\phi,\psi)-l_0(\phi,\psi) \big|
  \leq C \, \eps^{1/2} \, \sqrt{l_0[\phi] \, l[\psi]}
  \,,
$$
valid for arbitrary $\phi,\psi \in \mathcal{Q}$.
The rest of the proof then follows the lines
of the proof of Lemma~\ref{Lem.inter}.
\end{proof}
%

\subsection{Convergence of eigenvalues}
%
As an application of Theorem~\ref{Thm.norm},
we shall show now how it implies the eigenvalue asymptotics
of Theorem~\ref{Thm.mine}.
Recall that the numbers $\lambda_j(H)$ represent
either eigenvalues below the essential spectrum
or the threshold of the essential spectrum of~$H$.
In particular, they provide a complete information
about the spectrum of~$H$ if it is an operator with compact resolvent.
In our situation, this will be the case if~$I$ is bounded,
but let us stress that we allow infinite or semi-infinite intervals, too.

We begin with analysing the spectrum of the comparison operator.
\begin{Lemma}\label{Lem.comparison}
Let~$\kappa$ be bounded.
One has
$$
  \lambda_1(\hat{H}_0)
  = \lambda_1\big(-\Delta_D^I+\frac{\kappa}{\eps}\big)
  + \left(\frac{\pi}{2\eps}\right)^2
  \,.
$$
Moreover, for any integer $N \geq 2$,
there exists a positive constant~$\eps_0$
depending on~$N$, $\kappa$ and~$I$
such that for all $\eps < \eps_0$:
$$
  \forall j \in\{1,\dots,N\}, \qquad
  \lambda_j(\hat{H}_0)
  = \lambda_j\big(-\Delta_D^I+\frac{\kappa}{\eps}\big)
  + \left(\frac{\pi}{2\eps}\right)^2
  \,.
$$
\end{Lemma}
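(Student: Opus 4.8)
The plan is to analyze $\hat H_0$ using its tensor-product structure. Since
$$
  \hat H_0 = \Big(-\Delta_D^I+\tfrac{\kappa}{\eps}\Big)\otimes 1
  + 1 \otimes \Big(-\tfrac{1}{\eps^2}\,\Delta_{DN}^{(0,1)}\Big),
$$
its spectrum is, by the standard spectral theory of sums of commuting (tensor) operators, obtained by adding the spectra of the two factors. Denote $\mu_j := \lambda_j(-\Delta_D^I+\kappa/\eps)$ and recall that the eigenvalues of $-\Delta_{DN}^{(0,1)}$ are $\big(\tfrac{(2m-1)\pi}{2}\big)^2$ for $m\ge 1$, so those of $1\otimes\big(-\eps^{-2}\Delta_{DN}^{(0,1)}\big)$ are $\eps^{-2}\big(\tfrac{(2m-1)\pi}{2}\big)^2$, the smallest being $(\pi/2\eps)^2$ with a spectral gap of order $\eps^{-2}$ to the next one. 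The numbers $\lambda_j(\hat H_0)$ (eigenvalues below, or threshold of, the essential spectrum, in the sense fixed in the Introduction) are then the $j$-th smallest elements of the ``sum set,'' counted appropriately.

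First I would treat $j=1$: the bottom of the spectrum of $\hat H_0$ is $\inf\operatorname{spec} = \mu_1 + (\pi/2\eps)^2$, since both factors attain their infimum, and this is an exact identity valid for every admissible $\eps$ (this also recovers \eqref{pre.lb1}). This requires no smallness of $\eps$, only boundedness of $\kappa$ so that $-\Delta_D^I+\kappa/\eps$ is bounded below and its $\lambda_1$ is well defined. For the general statement with $N\ge 2$, the point is that for small $\eps$ the contributions coming from the transverse index $m\ge 2$ are pushed up by essentially $\tfrac{3}{4}\pi^2\eps^{-2}$ relative to the $m=1$ ``band.'' More precisely, I would show: (i) for every $j$, $\lambda_j(\hat H_0) \le \mu_j + (\pi/2\eps)^2$, by using the test subspace spanned by $\{\varphi_i\otimes\chi_1\}_{i=1}^j$ where $\varphi_1,\dots,\varphi_j$ realize $\mu_1,\dots,\mu_j$ in the minimax for $-\Delta_D^I+\kappa/\eps$; and (ii) the matching lower bound for $j\le N$ once $\eps$ is small enough. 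For (ii), note that on $\mathfrak H_1^\perp$ one has, by $\int|\partial_2\psi_\perp|^2 \ge \tfrac{9\pi^2}{4}\int|\psi_\perp|^2$ (the lowest DN-eigenvalue orthogonal to $\chi_1$ being $(3\pi/2)^2$), the bound $\hat H_0\restriction\mathfrak H_1^\perp \ge (3\pi/2\eps)^2 + \inf\kappa/\eps$, whereas $\mu_N + (\pi/2\eps)^2 \le \sup\kappa/\eps + C_N + (\pi/2\eps)^2$ using a fixed $N$-dimensional trial space of $\eps$-independent functions in $\sobi(I)$ (here boundedness of $I$, or at least existence of such a trial space, enters, and this is where the threshold $\eps_0$ depending on $N$, $\kappa$, $I$ comes from). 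Hence for $\eps<\eps_0$ the first $N$ minimax values of $\hat H_0$ are not affected by $\mathfrak H_1^\perp$, so they coincide with the first $N$ minimax values of $\hat H_0\restriction\mathfrak H_1$, which is exactly $(-\Delta_D^I+\kappa/\eps)\otimes 1$ restricted to the $\chi_1$-fibre, i.e.\ unitarily $-\Delta_D^I+\kappa/\eps$ shifted by $(\pi/2\eps)^2$. Combining (i) and (ii) gives the claimed equalities for $j\le N$.

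The main obstacle I anticipate is the bookkeeping in step (ii) when $I$ is unbounded and $-\Delta_D^I+\kappa/\eps$ has nontrivial essential spectrum: one must be careful that the $\lambda_j$'s here may be thresholds rather than genuine eigenvalues, so the ``first $N$ minimax values'' argument has to be phrased purely variationally (via the Rayleigh–Ritz characterization recalled in the Introduction) rather than by producing eigenvectors, and one needs that $\mu_N < $ threshold of $\operatorname{spec}_{\mathrm{ess}}$ grows no faster than $\sup\kappa/\eps + O(1)$ to keep it below the $\mathfrak H_1^\perp$-contribution — this is exactly why the hypothesis forces $\eps_0$ to depend on $N$, $\kappa$ and $I$. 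Everything else (the tensor-sum spectral identity, the orthogonal-decomposition argument already set up before the lemma, the elementary transverse eigenvalue estimates) is routine.
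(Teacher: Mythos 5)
Your proposal is correct, and it rests on the same structural fact as the paper's proof: the decoupled (tensor-sum) form of $\hat{H}_0$, so that its Rayleigh--Ritz values are obtained by merging the sequence $\mu_j+(\pi/2\eps)^2$ with the bands coming from the higher transverse modes, which sit at distance of order $\eps^{-2}$ above the first one. Where you differ is in how the interference of the higher transverse modes is excluded for $j\le N$. The paper argues by induction on $j$ from the sum-set identity (citing Reed--Simon) and then invokes the strong-coupling asymptotics~\eqref{strong}, already quoted for Theorem~\ref{Thm.mine}, to see that $\mu_j=\mathcal{O}(\eps^{-1})$ stays below the $m=2$ band for small $\eps$. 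You instead avoid~\eqref{strong} altogether: you split $\Hilbert_0=\mathfrak{H}_1\oplus\mathfrak{H}_1^\bot$ (the reducing decomposition already set up before the lemma), bound $\hat{H}_0$ on $\mathfrak{H}_1^\bot$ from below by $(3\pi/2\eps)^2+\inf\kappa/\eps$, and control $\mu_N$ from above by the elementary trial-space estimate $\mu_N\le \sup\kappa/\eps + C_N$ with a fixed $\eps$-independent $N$-dimensional subspace of $\sobi(I)$. This makes the proof self-contained (no external input beyond the minimax principle) and makes the dependence of $\eps_0$ on $N$, $\kappa$ and $I$ completely explicit, at the cost of being a bit longer than the paper's two-line induction; the paper's route is shorter precisely because~\eqref{strong} is already available. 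Two small points: your justification of the $j=1$ identity via ``both factors attain their infimum'' is unnecessary (no attainment is needed: $\lambda_1$ is the bottom of the spectrum and the spectrum of a tensor sum is the closure of the sum of the spectra), and boundedness of $I$ is not needed for your trial space --- any open interval admits $N$ disjointly supported bumps, with $C_N$ depending on $|I|$ --- which you essentially acknowledge; neither affects correctness.
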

\begin{proof}
Since~$\hat{H}_0$ is decoupled,
we know that (\cf~\cite[Corol.\ of Thm.~VIII.33]{RS1})
\begin{equation*}
  \left\{\lambda_j(\hat{H}_0)\right\}_{j=1}^\infty
  = \left\{\lambda_j\big(-\Delta_D^I+\frac{\kappa}{\eps}\big)\right\}_{j=1}^\infty
  + \left\{ \left(\frac{j\pi}{2\eps}\right)^2 \right\}_{j=1}^\infty
  \,,
\end{equation*}
and it only remains to arrange the sum of the numbers
on the right hand side into a non-decreasing sequence.
The assertion for $N=1$ is therefore trivial.
Let $j \geq 2$ and assume by induction that
$
  \lambda_{j-1}(\hat{H}_0) - \pi^2/(2\eps)^2
  = \lambda_{j-1}(-\Delta_D^I+\kappa/\eps)
$.
Then
$$
  \lambda_{j}(\hat{H}_0) - \pi^2/(2\eps)^2
  = \min\left\{
  \lambda_{j-1}(-\Delta_D^I+\kappa/\eps),
  3\pi^2/(2\eps)^2
  \right\}
$$
and the assertion of Lemma follows at once
due to the asymptotics~\eqref{strong}.
\end{proof}

Now, fix $j \geq 1$ and assume that~$\eps$ is so small
that the conclusions of Theorem~\ref{Thm.norm}
and Lemma~\ref{Lem.comparison} hold.
By virtue of Theorem~\ref{Thm.norm}, we have
$$
  \left|
  \left[\lambda_j(\hat{H}_\eps)
  -\left(\frac{\pi}{2\eps}\right)^2+\frac{k}{\eps}\right]^{-1}
  - \left[\lambda_j(\hat{H}_0)
  -\left(\frac{\pi}{2\eps}\right)^2+\frac{k}{\eps}\right]^{-1}
  \right|
  \ \leq \
  C_0 \, \eps^{3/2}
  \,,
$$
since the left hand side is estimated
by the norm of the resolvent difference.
Using now Lemma~\ref{Lem.comparison},
the above estimate is equivalent to
\begin{equation}\label{1/2}
  \left|
  \frac{1}{\eps \big[\lambda_j(\hat{H}_\eps)-\pi^2/(2\eps)^2\big]+k}
  -\frac{1}{\eps \, \lambda_j(-\Delta_D^I+\kappa/\eps)+k}
  \right|
  \ \leq \
  C_0 \, \eps^{1/2}
  \,.
\end{equation}
Consequently, recalling~\eqref{strong}, we conclude with
$$
  \lim_{\eps \to 0} \eps \left[\lambda_j(\hat{H}_\eps)
  -\left(\frac{\pi}{2\eps}\right)^2\right]
  = \lim_{\eps \to 0} \eps \, \lambda_j\big(-\Delta_D^I+\frac{\kappa}{\eps}\big)
  = \inf\kappa
  \,.
$$
This is indeed equivalent to Theorem~\ref{Thm.mine}
because~$\hat{H}_\eps$ and~$-\Delta_{DN}^{\Omega_\eps}$
are unitarily equivalent (therefore isospectral).

\begin{Remark}
Because of the lack of one half in the power of~$\eps$
in Theorem~\ref{Thm.norm}, it turns out that~\eqref{1/2}
yields a slightly worse result than Theorem~\ref{Thm.stronger}.
Let us also emphasize that Theorem~\ref{Thm.mine} and~\ref{Thm.stronger}
have been proved in Section~\ref{Sec.proof}
without the need to assume the extra condition~\eqref{Ass.derivative}.
On the other hand, Theorem~\ref{Thm.norm} contains an operator-type
convergence result of independent interest.
\end{Remark}
%

\section{Possible extensions}\label{Sec.end}
%
\subsection{Different boundary conditions}
The result of Theorem~\ref{Thm.stronger}
readily extends to the case of other boundary conditions
imposed on the sides
$\mathcal{L}_\eps\big(\{\inf I\}\times(0,1)\big)$
and $\mathcal{L}_\eps\big(\{\sup I\}\times(0,1)\big)$,
provided that the boundary conditions for the one-dimensional
Schr\"odinger operator are changed accordingly.

It is more interesting to impose different boundary
conditions on the approaching parallel curves as $\eps \to 0$.
As an example, let us keep the Dirichlet boundary conditions
but replace the Neumann boundary condition
by the Robin condition of the type
$$
  \frac{\partial \Psi}{\partial n}
  + (\alpha \circ \ell_\eps^{-1}) \, \Psi = 0
  \qquad\mbox{on}\qquad
  \gamma_\eps(I)
  \,.
$$
Here $\ell_\eps:=\mathcal{L}_\eps(\cdot,1)$
and~$\alpha:I\to\Real$
is assumed to be bounded and uniformly continuous.
Let us denote the corresponding Laplacian by
$-\Delta_{DR_\alpha}^{\Omega_\eps}$.
Then the method of the present paper gives
\begin{Theorem}\label{Thm.Robin}
For all $j \geq 1$,
\begin{align*}
  \lambda_j(-\Delta_{DR_\alpha}^{\Omega_\eps})
  &= \left(\frac{\pi}{2\eps}\right)^2
  + \lambda_j\big(-\Delta_D^I + \frac{\kappa+2\alpha}{\eps}\big)
  + \mathcal{O}(1)
  \\
  &= \left(\frac{\pi}{2\eps}\right)^2
  + \frac{\inf(\kappa+2\alpha)}{\eps}
  + o(\eps^{-1})
  \qquad\mbox{as}\qquad
  \eps \to 0
  \,.
\end{align*}
\end{Theorem}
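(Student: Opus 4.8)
The plan is to mimic the proof of Theorem~\ref{Thm.stronger} from Section~\ref{Sec.proof}, tracking the single place where the Robin condition changes the computation. As in Section~\ref{Sec.coord}, I would pass to the curvilinear coordinates $(s,t)$ via $\mathcal{L}_\eps$ and the unitary $U_\eps$, obtaining a unitarily equivalent operator in $\Hilbert_\eps = \sii\big(I\times(0,1),h_\eps\,ds\,dt\big)$. The only difference from the purely Neumann case is that the transferred boundary condition on $I\times\{1\}$ is now of Robin type: the quadratic form acquires an extra boundary term. A direct computation (the Robin condition $\partial\Psi/\partial n + (\alpha\circ\ell_\eps^{-1})\Psi=0$ on $\gamma_\eps$ translates, under $U_\eps$ and the change of variables, into the surface integral $\eps^{-1}\int_\partial \alpha(s)\,h_\eps(s,1)\,|\psi|^2$, up to the sign fixed by the orientation conventions in Section~\ref{Sec.coord}) shows the form $Q_\eps^{R_\alpha}$ is $Q_\eps$ plus precisely this term. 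With $h_\eps(s,1)=1-\eps\kappa(s)=1+\mathcal{O}(\eps)$ and $\alpha$ bounded, the extra term is $\eps^{-1}\int_\partial \alpha\,|\psi|^2 + \mathcal{O}(1)\|\psi\|^2$ when tested on reasonable functions.

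For the upper bound I would take the same test function $\psi(s,t)=\varphi(s)\chi_1(t)$ as in~\eqref{chi}. Since $\chi_1(1)^2 = 2$, the boundary term contributes $\eps^{-1}\int_I \alpha(s)\,|\varphi(s)|^2\,h_\eps(s,1)\,|\chi_1(1)|^2\,ds = \eps^{-1}\int_I 2\alpha(s)\,|\varphi(s)|^2\,ds + \mathcal{O}(1)$, which is exactly why the factor $2\alpha$ appears alongside $\kappa$. Combining this with the computation already carried out in the Upper bound subsection, one obtains
\begin{equation*}
  \frac{Q_\eps^{R_\alpha}[\psi]}{\|\psi\|_\eps^2}
  - \left(\frac{\pi}{2\eps}\right)^2 - \mathcal{O}(1)
  \leq \big[1+\mathcal{O}(\eps)\big]\,
  \frac{\int_I\big(|\varphi'|^2+\frac{\kappa+2\alpha}{\eps}|\varphi|^2\big)\,ds}
       {\int_I|\varphi|^2\,ds}\,,
\end{equation*}
and the minimax principle yields $\lambda_j(-\Delta_{DR_\alpha}^{\Omega_\eps}) - (\pi/2\eps)^2 \leq \lambda_j\big(-\Delta_D^I + \tfrac{\kappa+2\alpha}{\eps}\big)+\mathcal{O}(1)$, exactly as in~\eqref{upper}.

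For the lower bound the situation is more delicate, because the transverse operator now carries the parameter $\alpha(s)$ in its Robin condition at $t=1$. I would replace $T_\epsilon$ by the operator $T_{\epsilon,\beta}$ on $\sii\big((0,1),(1-\epsilon t)dt\big)$ with the same differential expression but boundary conditions $\chi(0)=0$, $\chi'(1)+\epsilon\beta\,\chi(1)=0$, and denote its lowest eigenvalue $\nu(\epsilon,\beta)$; bounding pointwise $\beta = \alpha(s)$, the form estimate $Q_\eps^{R_\alpha}[\psi] \geq \int |\partial_1\psi|^2/h_\eps + \int \eps^{-2}\nu(\eps\kappa(s),\alpha(s))\,|\psi|^2\,h_\eps$ holds after integrating by parts in $t$. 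Analytic perturbation theory in the two parameters (the $\epsilon=0$ operator is the same as before, with eigenfunction $\chi_1$) gives $\nu(\epsilon,\beta) = (\pi/2)^2 + \epsilon + 2\beta\epsilon + \mathcal{O}(\epsilon^2)$ uniformly for bounded $\beta$, where the $2\beta$ coefficient comes from the first-order boundary perturbation $2\beta|\chi_1(1)|^2/\|\chi_1\|^2 = 2\beta$. Inserting this into the estimate and proceeding verbatim as in the Lower bound subsection produces the matching lower bound, hence the first line of the claimed asymptotics. The second line then follows immediately from~\eqref{strong} applied with $\kappa$ replaced by $\kappa+2\alpha$. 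The main obstacle is the uniformity of the two-parameter expansion~\eqref{analytic} in $\beta$ over the range dictated by $\|\alpha\|_{L^\infty(I)}$, together with checking that the first-order coefficient is genuinely $2\alpha$ and not $\alpha$; this is where the boundary normalization $|\chi_1(1)|^2=2$ is essential, and it is the precise reason the answer is $\kappa+2\alpha$ rather than $\kappa+\alpha$.
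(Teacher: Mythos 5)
Your overall strategy is exactly the one the paper has in mind for Theorem~\ref{Thm.Robin} (the paper only sketches it: repeat the argument of Section~\ref{Sec.proof}, keeping track of the boundary term produced by the Robin condition), and your upper bound is correct: the transferred form is $Q_\eps$ plus $\eps^{-1}\int_I \alpha(s)\,h_\eps(s,1)\,|\psi(s,1)|^2\,ds$, and with the test function~\eqref{chi} the factor~$2$ indeed comes from $|\chi_1(1)|^2=2$.

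The lower bound, however, contains a concrete parametrization error, and as written that step fails. After the change of variables, the transverse form at fixed~$s$ is $\eps^{-2}\int_0^1|\chi'(t)|^2\big(1-\eps\kappa(s)t\big)\,dt+\eps^{-1}\alpha(s)\big(1-\eps\kappa(s)\big)|\chi(1)|^2$, whose natural boundary condition is $\chi'(1)+\eps\,\alpha(s)\,\chi(1)=0$; the Robin coefficient is $\eps\,\alpha(s)$, \emph{not} $\eps\,\kappa(s)\,\alpha(s)$. Your family $T_{\epsilon,\beta}$, with boundary condition $\chi'(1)+\epsilon\beta\,\chi(1)=0$ and the \emph{same} $\epsilon$ in the weight, couples the two parameters: substituting $\epsilon=\eps\kappa(s)$, $\beta=\alpha(s)$ yields the Robin coefficient $\eps\kappa(s)\alpha(s)$, so the claimed inequality with $\nu\big(\eps\kappa(s),\alpha(s)\big)$ is false in general (wherever $\kappa$ vanishes the $\alpha$-contribution would disappear altogether), and your expansion $\nu(\epsilon,\beta)=(\pi/2)^2+\epsilon+2\beta\epsilon+\mathcal{O}(\epsilon^2)$ would produce $(\kappa+2\kappa\alpha)/\eps$ instead of $(\kappa+2\alpha)/\eps$, failing to match your upper bound. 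The repair is straightforward: use a genuinely two-parameter family $T_{\epsilon,b}$ in $\sii\big((0,1),(1-\epsilon t)dt\big)$ with $\chi(0)=0$, $\chi'(1)+b\,\chi(1)=0$, for which joint analytic perturbation theory (this also settles the uniformity issue you raise) gives $\nu(\epsilon,b)=(\pi/2)^2+\epsilon+2b+\mathcal{O}(\epsilon^2+b^2)$, the coefficient $2$ being $|\chi_1(1)|^2$; then set $\epsilon=\eps\kappa(s)$ and $b=\eps\alpha(s)$ and proceed verbatim as in Section~\ref{Sec.lower}, with the constant~$C$ now depending on $\|\kappa\|_{L^\infty(I)}$ and $\|\alpha\|_{L^\infty(I)}$. (A minor slip in the same sentence: the first-order boundary contribution per unit Robin coefficient is $b\,|\chi_1(1)|^2=2b$; your expression $2\beta|\chi_1(1)|^2/\|\chi_1\|^2$ would equal $4\beta$.) With this correction your argument is complete and coincides with the paper's intended method.
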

Let us mention that strips with this combination
of Dirichlet and Robin boundary conditions
were studied in~\cite{FK3}.

\subsection{Higher-dimensional generalization}
Let~$\Omega_\eps$ be a three-dimensional layer instead of the planar strip.
That is, we keep the definition~\eqref{strip} with~\eqref{StripMap},
but now $\gamma: I \subseteq \Real^2 \to \Real^3$ is a parametrization of
a two-dimensional surface embedded in~$\Real^3$
and $n:=(\partial_1\gamma)\times(\partial_2\gamma)$,
where the cross denotes the vector product in $\Real^3$.
Let~$M$ be the corresponding mean curvature.
Proceeding in the same way as in Section~\ref{Sec.proof}
(we omit the details but refer to~\cite{DEK2}
for a necessary geometric background), we get
\begin{Theorem}\label{Thm.layer}
For all $j \geq 1$,
\begin{align*}
  \lambda_j(-\Delta_{DN}^{\Omega_\eps})
  &= \left(\frac{\pi}{2\eps}\right)^2
  + \lambda_j\big(-\Delta_D^\gamma + \frac{2M}{\eps}\big)
  + \mathcal{O}(1)
  \\
  &= \left(\frac{\pi}{2\eps}\right)^2
  + \frac{2\inf M}{\eps}
  + o(\eps^{-1})
  \qquad\mbox{as}\qquad
  \eps \to 0
  \,,
\end{align*}
where $-\Delta_D^\gamma$ denotes the Laplace-Beltrami operator
in $\sii\big(\gamma(I)\big)$, subject to Di\-richlet boundary conditions.
\end{Theorem}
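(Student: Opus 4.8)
The plan is to mimic exactly the two-sided variational argument of Section~\ref{Sec.proof}, reducing the layer problem to a one-dimensional Schrödinger operator on the surface, and then invoke the strong-coupling asymptotics. First I would set up the analogue of the curvilinear coordinates: using the map $\mathcal{L}_\eps(s,t)=\gamma(s)+\eps t\,n(s)$ with $s\in I\subseteq\Real^2$, the Jacobian of $\mathcal{L}_\eps$ is now $\eps\,h_\eps$ with $h_\eps(s,t)=1-2M(s)\,\eps t+K(s)\,\eps^2 t^2$, where $M$ and $K$ are the mean and Gauss curvatures of the surface; the metric pulled back to $I\times(0,1)$ is block-diagonal, with the transverse block equal to $\eps^2\,dt^2$ and the longitudinal block a perturbation of the induced metric on $\gamma(I)$ of size $\mathcal{O}(\eps)$. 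Under the injectivity of $\mathcal{L}_\eps$ and a smallness assumption on $\eps$ (now involving both principal curvatures) one gets uniform bounds $0<c\le h_\eps\le C<\infty$, exactly as in~\eqref{uniform}. The unitary transformation $U_\eps$ to $\Hilbert_\eps:=\sii\big(I\times(0,1),h_\eps\,ds\,dt\big)$ then produces a quadratic form $Q_\eps$ whose transverse part is $\eps^{-2}\int |\partial_t\psi|^2\,h_\eps$ and whose longitudinal part is $\int \langle G_\eps^{-1}\nabla_s\psi,\nabla_s\psi\rangle\,h_\eps$ with $G_\eps$ the (scaled) longitudinal metric, which equals the surface metric $G_0$ up to $\mathcal{O}(\eps)$.

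For the \textbf{upper bound}, I would insert the test function $\psi(s,t)=\varphi(s)\chi_1(t)$ with $\chi_1(t)=\sqrt2\sin(\pi t/2)$ as in~\eqref{chi}, and compute
$$
  Q_\eps[\psi]-\Big(\frac{\pi}{2\eps}\Big)^2\|\psi\|_\eps^2
  =\int_I\Big(\langle A_\eps(s)\nabla\varphi,\nabla\varphi\rangle
  +\frac{2M(s)}{\eps}\,|\varphi|^2\Big)\sqrt{g_0}\,ds+\mathcal{O}(1)\|\varphi\|^2,
$$
where $A_\eps\to G_0^{-1}$ uniformly and the $\eps^{-1}$ coefficient is $2M$ because $\int_0^1 h_\eps^{-1}\partial_t(\text{stuff})$ and the expansion $h_\eps=1-2M\eps t+\mathcal{O}(\eps^2)$ give, after integrating the transverse Rayleigh quotient, a first-order term $2M\int_0^1 t\,|\chi_1'|^2\,dt$ which one checks equals $2M$ using $\int_0^1 t|\chi_1'|^2=\tfrac12$ (the same elementary integral that produces the ``$+\epsilon$'' in~\eqref{analytic}). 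Combined with the norm equivalence~\eqref{norms}, the minimax principle yields $\lambda_j(H_\eps)-\pi^2/(2\eps)^2\le[1+\mathcal{O}(\eps)]\,\lambda_j\big(-\Delta_D^\gamma+2M/\eps\big)+\mathcal{O}(1)$, and the prefactor $[1+\mathcal{O}(\eps)]$ is absorbed by~\eqref{strong} exactly as in~\eqref{upper}.

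For the \textbf{lower bound}, I would again bound from below the transverse part by its lowest eigenvalue: for every $\psi\in D(Q_\eps)$,
$$
  \int\frac{|\partial_t\psi|^2}{\eps^2}\,h_\eps
  \ge\int\frac{\nu\big(2\eps M(s)+\mathcal{O}(\eps^2)\big)}{\eps^2}\,|\psi|^2\,h_\eps,
$$
where $\nu(\cdot)$ is the same one-parameter family of transverse eigenvalues as in Section~\ref{Sec.lower} (the $\mathcal{O}(\eps^2)$ Gauss-curvature term in $h_\eps$ contributes only at order $\mathcal{O}(1)$ after division by $\eps^2$, and is controlled by the boundedness of $K$); using~\eqref{analytic}, $\nu(2\eps M)/\eps^2=\pi^2/(2\eps)^2+2M/\eps+\mathcal{O}(1)$. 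Discarding the resulting error and using $\langle G_\eps^{-1}\nabla_s\psi,\nabla_s\psi\rangle\ge[1-\mathcal{O}(\eps)]\langle G_0^{-1}\nabla_s\psi,\nabla_s\psi\rangle$, the minimax principle gives the matching lower bound on $\lambda_j(H_\eps)$. Together the two bounds prove the first displayed identity of Theorem~\ref{Thm.layer}, and the second follows from the semiclassical asymptotics~\eqref{strong} applied to $-\Delta_D^\gamma+2M/\eps$ on the (possibly noncompact) manifold $\gamma(I)$ — this is where one again cites~\cite[App.~A]{FK1}, which is stated in any dimension.

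The \textbf{main obstacle} I anticipate is purely bookkeeping in the geometry: verifying that the first-order-in-$\eps$ term of the pulled-back metric really produces the coefficient $2M$ (and not some other curvature combination), and checking that all remaining curvature-dependent terms — the Gauss curvature in $h_\eps$, the cross terms in $G_\eps$, the second fundamental form entering $\nabla_s h_\eps$ — are genuinely $\mathcal{O}(1)$ relative to the $\eps^{-1}$ scale rather than $\mathcal{O}(\eps^{-1})$. These are exactly the computations carried out in~\cite{DEK2}, so the proof reduces to importing that geometric background and then running the Section~\ref{Sec.proof} argument verbatim; there is no new analytic difficulty.
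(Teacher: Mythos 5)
Your proposal follows essentially the same route as the paper: the paper itself gives no detailed proof of Theorem~\ref{Thm.layer}, saying only that one proceeds as in Section~\ref{Sec.proof} with the geometric background of~\cite{DEK2}, and your plan (Jacobian $h_\eps=1-2M\eps t+K\eps^2t^2$, product test function $\varphi\chi_1$ for the upper bound, transverse-eigenvalue minorization for the lower bound, then the strong-coupling asymptotics~\eqref{strong}) is exactly that argument, so the structure is sound.

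One arithmetic point should be fixed, since it concerns the key coefficient. The constant $2M/\eps$ does not come from $\int_0^1 t\,|\chi_1'(t)|^2\,dt$ alone, and that integral is not $\tfrac12$: with $\chi_1(t)=\sqrt2\sin(\pi t/2)$ one has $\int_0^1 t\,|\chi_1'|^2\,dt=\pi^2/8-\tfrac12$ and $(\pi/2)^2\int_0^1 t\,|\chi_1|^2\,dt=\pi^2/8+\tfrac12$. What actually matters, after subtracting $\big(\tfrac{\pi}{2\eps}\big)^2\|\psi\|_\eps^2$ (so that the weight $h_\eps$ multiplies both $|\chi_1'|^2$ and $-(\pi/2)^2|\chi_1|^2$), is
\begin{equation*}
  \int_0^1 t\,\Big[\,|\chi_1'(t)|^2-\Big(\frac{\pi}{2}\Big)^2|\chi_1(t)|^2\,\Big]\,dt \;=\; -1\,,
\end{equation*}
which, paired with the first-order term $-2M(s)\,\eps t$ of $h_\eps$ and the factor $\eps^{-2}$, produces exactly $+\,2M(s)/\eps$; this is the same computation that gives $\kappa/\eps$ in the strip case and the ``$+\epsilon$'' in~\eqref{analytic} (the latter being the first-order perturbation of the transverse Rayleigh quotient, not of its numerator). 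With this correction, and the observation you already make that the $K\eps^2t^2$ part of $h_\eps$ and the $\mathcal{O}(\eps)$ perturbation of the longitudinal metric contribute only at order $\mathcal{O}(1)$ (under boundedness of the second fundamental form), your two-sided bound yields the first display of Theorem~\ref{Thm.layer}, and the second follows from the semiclassical asymptotics as in the paper.
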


Notice that the leading geometric term in the asymptotic expansions
depends on the extrinsic curvature only.
This suggests that the spectral properties of the Dirichlet-Neumann layers
will differ significantly from the purely Dirichlet case
studied in~\cite{DEK2,CEK,LL1,LL3},
where the Gauss curvature of~$\gamma$ plays a crucial role.

\subsection{Curved ambient space}
The results of the present paper extend to the case
of strips embedded in an abstract two-dimensional Riemannian manifold~$\mathcal{A}$
instead of the Euclidean plane.
Indeed, it follows from~\cite{K1} (see also~\cite{K3} and~\cite[Sec.~5]{FK4})
that the quadratic form~$Q_\eps$ has the same structure;
the only difference is that in this more general situation
$h_\eps$~is obtained as the solution of the Jacobi equation
\begin{equation*}
  \partial_2^2 h_\eps + \eps^2\,K\,h_\eps = 0
  \qquad\textrm{with}\qquad\left\{
  \begin{aligned}
    h_\eps(\cdot,0) &= 1 \,, \\
    \partial_2 h_\eps(\cdot,0) &= -\eps\,\kappa \,,
  \end{aligned}
  \right.
\end{equation*}
where~$K$ is the Gauss curvature of~$\mathcal{A}$.
Here~$\kappa$ is the curvature of $\gamma:I\to\mathcal{A}$
(it is in fact the geodesic curvature of~$\gamma$
if the ambient space~$\mathcal{A}$ is embedded in~$\Real^3$).
Consequently, up to higher-order terms in~$\eps$,
the function $h_\eps$ coincides with the expression~\eqref{Jacobian}
for the flat case $K=0$. Namely,
$$
  h_\eps(s,t) = 1 - \kappa(s) \, \eps \, t + \mathcal{O}(\eps^2)
  \qquad\mbox{as}\qquad
  \eps \to 0
  \,.
$$
Following the lines of the proof in Section~\ref{Sec.proof},
it is then possible to check that Theorems~\ref{Thm.mine}
and~\ref{Thm.stronger} remain valid without changes.
The curvature of the ambient space comes into the asymptotics
via higher-order terms only.

\subsection*{Acknowledgment}
The work has been supported by 
the Czech Academy of Sciences and its Grant Agency
within the projects IRP AV0Z10480505 and A100480501,
and by the project LC06002 of the Ministry of Education,
Youth and Sports of the Czech Republic.

%
%
\providecommand{\bysame}{\leavevmode\hbox to3em{\hrulefill}\thinspace}
\providecommand{\MR}{\relax\ifhmode\unskip\space\fi MR }
\providecommand{\MRhref}[2]{%
  \href{http://www.ams.org/mathscinet-getitem?mr=#1}{#2}
}
\providecommand{\href}[2]{#2}

\end{document}